\newcommand\NN{\mathbb{N}}
\newcommand\ZZ{\mathbb{Z}}
\newcommand\eps{{\varepsilon}}
\newcommand\dist{\operatorname{dist}}
\newcommand{\address}{Address: Department of Mathematics, University of North Texas, 1155 Union Circle \#311430, Denton, TX 76203-5017, USA; E-mail: allaart@unt.edu}
\newtheorem{theorem}{Theorem}%[section]
\newtheorem{lemma}[theorem]{Lemma}
\newtheorem{remark}[theorem]{Remark}
\title{Digital sum inequalities and approximate convexity of Takagi-type functions} 
\author{Pieter C. Allaart \\
University of North Texas \footnote{\address}}
\date{\today}
\begin{document}

\maketitle

\begin{abstract}
For an integer $b\geq 2$, let $s_b(n)$ be the sum of the digits of the integer $n$ when written in base $b$, and let $S_b(N)=\sum_{n=0}^{N-1}s_b(n)$. Several inequalities are derived for $S_b(N)$. Some of the inequalities can be interpreted as comparing the average value of $s_b(n)$ over integer intervals of certain lengths to the average value of a beginning subinterval. Two of the main results are applied to derive a pair of ``approximate convexity" inequalities for a sequence of Takagi-like functions. One of these inequalities was discovered recently via a different method by V. Lev; the other is new.

\bigskip
{\it AMS 2000 subject classification}: 11A63 (primary); 26A27, 26A51 (secondary)

\bigskip
{\it Key words and phrases}: Digital sum, Cumulative digital sum, Takagi function, Approximate convexity.

\end{abstract}

\section{Introduction}

Fix an integer $b\geq 2$, and for $n\in\NN$, write the $b$-ary representation of $n$ as $n=\sum_{j=0}^\infty \alpha_j(n)b^j$, where $\alpha_j(n):=\alpha_j(n;b)\in\{0,1,\dots,b-1\}$ for each $j$. Define the $b$-ary digital sum and cumulative $b$-ary digital sum respectively by
\begin{equation*}
s_b(n)=\sum_{j=0}^\infty \alpha_j(n), \qquad n\in\ZZ_+,
\end{equation*}
and
\begin{equation*}
S_b(N)=\sum_{n=0}^{N-1}s_b(n), \qquad N\in\ZZ_+,
\end{equation*}
where $\ZZ_+$ denotes the set of nonnegative integers, and we make the usual convention that the empty sum is equal to zero. These digital sums have been well investigated in the literature, especially for the case $b=2$. The investigations have mainly focused in two directions: finding exact or asymptotic formulas for $S_b(N)$ (e.g. Trollope \cite{Trollope}, Delange \cite{Delange}) or determining the probability distribution of $s_b(n)$ as $n$ ranges over certain subsets of the positive integers (e.g. Mauduit and S\'ark\"ozy \cite{Mauduit}, Rivat \cite{Rivat} or Drmota, Mauduit and Rivat \cite{Drmota}, among many others). Stolarsky \cite{Stolarsky} discusses a wide range of applications of digital sums. The aim of the present article is to prove a number of inequalities for $S_b(N)$. One of these inequalities, for the ternary case, came about naturally in the author's quest to find a simpler proof of a recent result of Lev \cite{Lev} concerning the ``approximate convexity" of a particular continuous but nowhere differentiable function akin to the Takagi function. The other results all concern general $b$. They are either needed in the proof of the above-mentioned inequality, or are further developments of special cases of it.
Some of the inequalities are most elegantly stated in terms of the average values
\begin{equation}
\bar{s}_b(s,t):=\frac{S_b(t)-S_b(s)}{t-s}=\frac{1}{t-s}\sum_{n=s}^{t-1}s_b(n), \qquad 0\leq s<t.
\label{eq:averages}
\end{equation}
The inequalities of Theorems \ref{thm:general-bound} and \ref{thm:b-times-longer} below compare the average value of $s_b(n)$ over certain intervals of integers to the average value over a beginning subinterval.

The first inequality is in effect a strong form of superadditivity. It is known for the case $b=2$; see, for instance, section 4 of McIlroy \cite{McIlroy}, where the inequality is used to determine the extremal cost in a merging process.

\begin{theorem} \label{thm:-strong-super-additivity}
For any nonnegative integers $n$ and $m$, we have
\begin{equation}
S_b(m+n)\geq S_b(m)+S_b(n)+\min\{m,n\}.
\label{eq:super-addivity}
\end{equation}
\end{theorem}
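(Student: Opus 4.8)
By the symmetry of inequality~\eqref{eq:super-addivity} we may assume $m\le n$, and since $m=0$ is trivial we take $1\le m\le n$. Because $S_b(m+n)-S_b(n)=\sum_{j=0}^{m-1}s_b(n+j)$ and $S_b(m)=\sum_{j=0}^{m-1}s_b(j)$, the assertion is equivalent to $\sum_{j=0}^{m-1}\bigl(s_b(n+j)-s_b(j)\bigr)\ge m$. The plan is to apply the standard carry identity $s_b(n)+s_b(j)-s_b(n+j)=(b-1)\eps(n,j)$, where $\eps(n,j)$ is the number of carries made when $n$ and $j$ are added in base $b$; summing over $j$ the claim becomes
\[
\sum_{j=0}^{m-1}\eps(n,j)\ \le\ \frac{m\bigl(s_b(n)-1\bigr)}{b-1}.
\]

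Next I would split the carry count by digit position. A carry leaves position $i$ when $n+j$ is computed iff $(n\bmod b^{i+1})+(j\bmod b^{i+1})\ge b^{i+1}$; writing $r_i=n\bmod b^{i+1}$ and $\rho_i=m\bmod b^{i+1}$, the number of such $j$ in $[0,m)$ equals $\lfloor m/b^{i+1}\rfloor r_i+(\rho_i+r_i-b^{i+1})^+$. Since $(x+y-B)^+\le xy/B$ whenever $0\le x,y\le B$ (this is just $(B-x)(B-y)\ge0$), this count is at most $mr_i/b^{i+1}$, so $T_i:=mr_i/b^{i+1}-\#\{0\le j<m:\text{carry leaves }i\}$ is $\ge0$. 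Combining with the identity $\sum_{i\ge0}r_i/b^{i+1}=s_b(n)/(b-1)$ (write $r_i=\sum_{k\le i}\alpha_k(n)b^k$ and interchange the sums), the displayed estimate is equivalent to
\[
\sum_{i\ge0}T_i\ \ge\ \frac{m}{b-1},\qquad T_i=\begin{cases}\rho_i r_i/b^{i+1}, & \rho_i+r_i\le b^{i+1},\\ (b^{i+1}-\rho_i)(b^{i+1}-r_i)/b^{i+1}, & \rho_i+r_i>b^{i+1}.\end{cases}
\]

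I would prove this last inequality by strong induction on $m+n$, for all integers $0\le m\le n$ (the case $m=0$ being trivial). Put $L=\lfloor\log_b n\rfloor$. For $i\ge L$ one has $\rho_i=m$ and $r_i=n$, and for $i\ge L+1$ also $\rho_i+r_i=m+n<b^{i+1}$, so $\sum_{i\ge L+1}T_i=mn/\bigl(b^{L+1}(b-1)\bigr)$. If $m+n<b^{L+1}$ then $T_L=mn/b^{L+1}$ as well, and $\sum_{i\ge L}T_i=mn/\bigl(b^{L}(b-1)\bigr)\ge m/(b-1)$ since $n\ge b^L$. If $m+n\ge b^{L+1}$ then $T_L=(b^{L+1}-m)(b^{L+1}-n)/b^{L+1}$, and a short computation gives $\sum_{i\ge L}T_i\ge m/(b-1)$ whenever $m\le(b-1)b^L$. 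The only remaining case is $(b-1)b^L<m\le n<b^{L+1}$, which forces $\alpha_L(m)=\alpha_L(n)=b-1$; setting $m'=m-(b-1)b^L$ and $n'=n-(b-1)b^L$ we get $0\le m'\le n'<b^L$ with $m'+n'<m+n$, and since $T_i$ depends only on $m,n$ modulo $b^{i+1}$ it is unchanged on passing to $(m',n')$ for all $i\le L-1$. Hence
\[
\sum_{i\ge0}T_i(m,n)=\Bigl(\sum_{i\ge0}T_i(m',n')-\sum_{i\ge L}T_i(m',n')\Bigr)+\sum_{i\ge L}T_i(m,n),
\]
and, evaluating the two tail sums explicitly (each is a finite geometric expression) and inserting the inductive bound $\sum_{i\ge0}T_i(m',n')\ge m'/(b-1)$, one checks that the right-hand side is $\ge m/(b-1)$, closing the induction.

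The preliminary reductions are all quick: the per-position carry count, the bound $(x+y-B)^+\le xy/B$, and the series $\sum_i r_i/b^{i+1}=s_b(n)/(b-1)$ are one-liners. I expect the real work to be concentrated in the inductive step --- recognizing that the awkward case is precisely the one in which $m$ and $n$ share the top base-$b$ digit $b-1$, which is what licenses the reduction to $(m',n')$, and then carrying out the bookkeeping that recovers the constant $m/(b-1)$ from the geometric tails together with the inductive estimate. That accounting, rather than any isolated trick, should be the main obstacle.
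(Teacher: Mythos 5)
Your proposal is correct, but it proceeds by a genuinely different route from the paper. The paper argues directly on the digit sums: it inducts on $m+n$ and splits according to whether the block $\{m+1,\dots,m+n-1\}$ contains a power of $b$; in the first case it strips the leading digit, and in the second it combines a translation by $b^p$ with a complementation identity ($s_b(jb^p-r-1)+s_b(r)$ constant in $r$) to reassemble the inequality. You instead pass through the carry identity $s_b(n)+s_b(j)-s_b(n+j)=(b-1)\eps(n,j)$, count carries position by position via $(n\bmod b^{i+1})+(j\bmod b^{i+1})\geq b^{i+1}$, and reduce everything to the quantitative slack estimate $\sum_i T_i\geq m/(b-1)$. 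I checked the key computations: the per-position count $\lfloor m/b^{i+1}\rfloor r_i+(\rho_i+r_i-b^{i+1})^+$, the identity $\sum_i r_i/b^{i+1}=s_b(n)/(b-1)$, the Case B criterion (which reduces to $v(bu-B)\geq 0$ with $u=B-m$, $v=B-n$, hence exactly to $m\leq(b-1)b^L$), and the final reduction to $(m',n')$, where the tail bookkeeping in fact closes with exact equality, $(b-1)(b^L-m')(b^L-n')+mn-bm'n'=(b-1)b^{2L+1}$. All are sound, and the induction on $m+n$ is well-founded since $m'+n'<m+n$ and the case $m'=0$ is trivial. What your approach buys is an explicit, quantitative accounting of where the inequality is tight (the slack terms $T_i$ and the identification of the hard case as shared leading digit $b-1$), at the cost of more arithmetic; the paper's argument is shorter and stays entirely within integer digit manipulations, and its complementation lemma is reused later in the proofs of Theorems \ref{thm:ternary-inequality} and \ref{thm:general-bound}, which is the main structural reason the author sets things up that way.
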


Theorem \ref{thm:-strong-super-additivity} is used to prove the following result, which specializes to the case $b=3$ and is a number-theoretic version of Theorem 3 of Lev \cite{Lev}. See Section \ref{sec:application}, where this connection is outlined in detail.

\begin{theorem} \label{thm:ternary-inequality}
For any integers $k$, $l$ and $m$ with $0\leq l\leq k\leq m$, we have
\begin{equation}
S_3(m+k+l)+S_3(m-k)+S_3(m-l)-3S_3(m)\leq 2k+l.
\label{eq:ternary-inequality}
\end{equation}
\end{theorem}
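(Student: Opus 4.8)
The plan is to eliminate the term $3S_3(m)$ by an exact base-$3$ identity, recast the inequality as a \emph{lower} bound on a block of digit sums (so that the strong superadditivity of Theorem~\ref{thm:-strong-super-additivity} points in the useful direction), dispose of the bulk of the parameter range by that theorem alone, and close the remaining range with the averaging inequalities of Theorems~\ref{thm:general-bound} and~\ref{thm:b-times-longer} — which is the role those results are meant to play here.

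First I would use $S_3(3m)=3S_3(m)+3m$ (obtained by grouping $[0,3m)$ into the $m$ triples $\{3q,3q+1,3q+2\}$ and applying $s_3(3q+r)=s_3(q)+r$) to replace $3S_3(m)$ by $S_3(3m)-3m$. Setting $p:=m-k$ and $q:=m-l$, so that $0\le p\le q\le m$, $m+k+l=3m-p-q$, and $2k+l=3m-2p-q$, a short rearrangement shows that \eqref{eq:ternary-inequality} is equivalent to
\begin{equation*}
S_3(3m)-S_3(3m-p-q)\;=\;\sum_{n=3m-p-q}^{3m-1}s_3(n)\;\ge\;S_3(p)+S_3(q)+2p+q .
\end{equation*}
Applying Theorem~\ref{thm:-strong-super-additivity} to the splitting $3m=(3m-p-q)+(p+q)$ gives $S_3(3m)-S_3(3m-p-q)\ge S_3(p+q)+\min\{3m-p-q,\,p+q\}$. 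If $p+q\le\tfrac32 m$ the minimum is $p+q$, and it remains only to check $S_3(p+q)\ge S_3(p)+S_3(q)+p$; since $p=\min\{p,q\}$ this is once more Theorem~\ref{thm:-strong-super-additivity}, so that range is done.

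The genuinely hard range is $p+q>\tfrac32 m$, equivalently $k+l<\tfrac12 m$ (both $k$ and $l$ small relative to $m$): here the superadditive slack $\min\{3m-p-q,p+q\}=3m-p-q$ is too small, and one must estimate the average of $s_3$ over the long interval $[m+k+l,\,3m)$, which fills more than half of $[0,3m)$ and ends at the multiple $3m$. The plan is to split this interval into its last $p$ integers and the preceding $q$ integers and to compare each block against beginning data: via $s_3(3^j-1-n)=2j-s_3(n)$ for $0\le n<3^j$ one has $S_3(p)+2p=\sum_{n=2\cdot 3^j}^{2\cdot 3^j+p-1}s_3(n)$ and $S_3(q)+q=\sum_{n=3^j}^{3^j+q-1}s_3(n)$, so what is required is exactly that the averages of $s_3$ over the two end-blocks just below $3m$ dominate these (shifted) beginning-blocks of the same lengths — precisely a statement of the form furnished by Theorems~\ref{thm:general-bound} and~\ref{thm:b-times-longer} applied with $b=3$.

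I expect this last range to be the main obstacle. Because the bound $2k+l$ is attained — which is what makes Theorem~\ref{thm:ternary-inequality} equivalent to a sharp approximate-convexity statement — the averaging step can afford no loss beyond the built-in constants $2$ and $1$; the ``$+2$'' attached to the block immediately below $3m$ is tight and reflects the abundance of the digit $2$ among integers just below a power of $3$. A secondary, more mechanical difficulty is the case bookkeeping when $3m-p-q$ (and hence the splitting point) is not itself a multiple of $3$: one peels off at most two incomplete base-$3$ blocks at each relevant endpoint and verifies in the few resulting sub-cases that the superadditivity slack, together with the averaging constants, still absorbs the residue terms.
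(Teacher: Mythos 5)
Your reduction and your treatment of the first range are correct and coincide with the paper's own first step: substituting $S_3(3m)=3S_3(m)+3m$ (Lemma \ref{lem:base-multiply}) and applying Theorem \ref{thm:-strong-super-additivity} twice disposes exactly of the case $m\leq 2(k+l)$, i.e.\ $p+q\leq\tfrac32 m$ in your notation, just as in the paper.

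The gap is in the remaining range $m>2(k+l)$, and it is not merely a matter of unfinished bookkeeping: the two block inequalities you propose to prove there are individually false. Concretely, take $m=11$, $k=5$, $l=0$ (so $p=6$, $q=11$, and $m>2(k+l)$ holds). Your first block inequality asks for
\begin{equation*}
\sum_{n=3m-p}^{3m-1}s_3(n)=\sum_{n=27}^{32}s_3(n)=1+2+3+2+3+4=15\;\geq\;S_3(6)+2\cdot 6=9+12=21,
\end{equation*}
which fails by $6$; the theorem survives only because the other block $\Sigma(16,27)=45$ exceeds its target $S_3(11)+11=32$ by $13$. So the deficit and surplus must be traded between the two blocks, and no termwise splitting of $[3m-p-q,3m)$ into a $p$-block and a $q$-block can work when $3m$ sits just above a power of $3$. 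Moreover, Theorems \ref{thm:general-bound} and \ref{thm:b-times-longer} cannot supply what you want even where it is true: they are \emph{upper} bounds on the average of $s_b$ over a long interval in terms of a beginning subinterval of that same interval, whereas you need \emph{lower} bounds on end-blocks of $[0,3m)$ compared against translates of initial segments sitting at unrelated locations $2\cdot 3^j$ and $3^j$. (In the paper those two theorems are downstream of Theorem \ref{thm:ternary-inequality} — generalizations of its special cases $l=0$ and $l=k$ — not ingredients of its proof.) The paper instead closes the hard range by induction on $m+k+l$: since $m>2(k+l)$ forces $m+k+l<2(m-l)$ and $m+k+l<3(m-k)$, the window $\{m-k,\dots,m+k+l-1\}$ contains at most one power of $3$, and one either strips a common leading digit from all three blocks (no power of $3$ present) or, when $m-j=3^p$ lies inside the window, splits at $j$, applies the induction hypothesis to the smaller triple, and matches the leftover pieces using the complementation identity of Lemma \ref{lem:complementation} together with Theorem \ref{thm:-strong-super-additivity}. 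Some such inductive mechanism for transferring surplus between the blocks is what your sketch is missing.
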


Two special cases of the above inequality are particularly interesting: the case $l=0$ and the case $l=k$. For $l=0$, \eqref{eq:ternary-inequality} reduces to
\begin{equation*}
S_3(m+k)+S_3(m-k)-2S_3(m)\leq 2k.
\end{equation*}
This inequality holds in fact with strict inequality, and the factor 2 on the right can not be replaced by any smaller number. These observations follow from the following, more general result.

\begin{theorem} \label{thm:general-bound}
Let $b\geq 2$ be arbitrary. 
\begin{enumerate}[(i)]
\item For any nonnegative integers $k$ and $m$ with $k\leq m$, we have
\begin{equation}
S_b(m+k)+S_b(m-k)-2S_b(m)\leq \left[\frac{b+1}{2}\right]k,
\label{eq:general-bound}
\end{equation}
where $[x]$ denotes the greatest integer less than or equal to $x$. 
The constant $[(b+1)/2]$ can not be replaced by a smaller constant. However, strict inequality holds in \eqref{eq:general-bound} when $b$ is odd.
\item For any nonnegative integers $n$ and $k$, we have
\begin{equation}
\bar{s}_b(n,n+2k)\leq \bar{s}_b(n,n+k)+\frac12\left[\frac{b+1}{2}\right].
\label{eq:general-bound-av}
\end{equation}
\end{enumerate}
\end{theorem}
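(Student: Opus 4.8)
The plan is to recast $D(m,k):=S_b(m+k)+S_b(m-k)-2S_b(m)$ as a weighted sum of $b$-adic valuations and estimate it level by level. First, part (ii) is an immediate consequence of (i): applying (i) with $m$ replaced by $n+k$ (the hypothesis $k\le n+k$ is automatic) gives $S_b(n+2k)-2S_b(n+k)+S_b(n)\le[(b+1)/2]k$; dividing by $2k$ and recognizing the left‐hand side as $\bar s_b(n,n+2k)-\bar s_b(n,n+k)$ yields \eqref{eq:general-bound-av}. So everything reduces to (i).

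For (i) I would first establish the identity
\[
D(m,k)=k^2-(b-1)\sum_{n=m-k+1}^{m+k-1}\bigl(k-|n-m|\bigr)v_b(n)=(b-1)\sum_{\ell\ge1}\tau_\ell,
\]
where $v_b(n)$ is the exponent of $b$ in $n$ and $\tau_\ell=2P_{b^\ell}(m)-P_{b^\ell}(m+k)-P_{b^\ell}(m-k)$ with $P_g(N)=\tfrac{(N\bmod g)(g-(N\bmod g))}{2g}$. The first equality comes from telescoping $S_b$, substituting $s_b(n)-s_b(n-1)=1-(b-1)v_b(n)$, and interchanging the order of summation (the resulting coefficients are the ``tent'' weights $k-|n-m|$). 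The second follows by writing $v_b(n)=\sum_{\ell\ge1}\mathbf 1\{b^\ell\mid n\}$, summing the tent over each progression $b^\ell\ZZ$, and using the closed form $\sum_{n=0}^{N-1}\lfloor n/b^\ell\rfloor=\tfrac{N^2-b^\ell N}{2b^\ell}+P_{b^\ell}(N)$. This is also the point where the link with Takagi‐type functions becomes visible.

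Since $P_g\ge0$ we have, for every $\ell$, $\tau_\ell\le\min\!\bigl(2P_{b^\ell}(m),\,k^2/b^\ell\bigr)$, and $2P_{b^\ell}(m)=\tfrac{r(b^\ell-r)}{b^\ell}\le b^\ell/4$, with \emph{strict} inequality when $b$ is odd (then $b^\ell$ is odd, so $r\ne b^\ell/2$). Hence, letting $L$ be the largest index with $b^L\le2k$,
\[
D(m,k)\le(b-1)\sum_{\ell\ge1}\min\!\Bigl(\tfrac{b^\ell}{4},\tfrac{k^2}{b^\ell}\Bigr)=\Bigl(\frac{b^{L+1}}{4}+\frac{bk^2}{b^{L+1}}\Bigr)-\frac b4\le\frac{b+1}{2}\,k-\frac b4,
\]
the last step using convexity of $u\mapsto\tfrac u4+\tfrac{bk^2}{u}$ on $[2k,2bk]$ together with $2k<b^{L+1}\le2bk$. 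For $b$ odd this already gives $D(m,k)<\tfrac{b+1}{2}k=[(b+1)/2]k$, proving both the inequality and its strictness. For $b$ even the same chain gives only $D(m,k)\le\tfrac{b+1}{2}k-\tfrac b4$, which for $k$ in certain ranges exceeds $\tfrac b2k=[(b+1)/2]k$; here one must refine the bound on $\sum_{\ell\le L}2P_{b^\ell}(m)$, exploiting that $\tau_\ell$ can be close to $b^\ell/4$ for \emph{at most one} index $\ell$ (the congruences $m\equiv k\equiv b^\ell/2\pmod{b^\ell}$ needed for $\tau_\ell\approx b^\ell/4$ are incompatible for two different values of $\ell$ when $b$ is even), the remaining levels falling short of $b^\ell/4$ by a total amount just sufficient to restore $D(m,k)\le\tfrac b2k$. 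I expect making this last estimate exactly sharp for even $b$ — in particular for $b=2$, where $D(k,k)=k$ for every $k$ — to be the main obstacle.

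Finally, for sharpness: when $b$ is even take $m=k=b/2$, so $D(m,k)=S_b(b)-2S_b(b/2)=\binom b2-2\binom{b/2}{2}=(b/2)^2=[(b+1)/2]k$, which is equality. When $b$ is odd take $m_L=k_L=\tfrac12(b^{L+1}-1)$ (all base‑$b$ digits equal to $(b-1)/2$); the identity above yields $\tau_\ell=\tfrac{(b^\ell-1)^2}{4b^\ell}$ for $1\le\ell\le L+1$ and $\tau_\ell=m_L^2/b^\ell$ for $\ell>L+1$, whence
\[
D(m_L,k_L)=[(b+1)/2]\,k_L-\tfrac12(L+1)(b-1)<[(b+1)/2]\,k_L,\qquad \frac{D(m_L,k_L)}{k_L}\longrightarrow[(b+1)/2],
\]
so the constant $[(b+1)/2]$ cannot be replaced by anything smaller.
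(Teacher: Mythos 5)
Your reduction of (ii) to (i), your identity $D(m,k)=(b-1)\sum_{\ell\ge1}\bigl(2P_{b^\ell}(m)-P_{b^\ell}(m+k)-P_{b^\ell}(m-k)\bigr)$, and the two sharpness computations all check out, and for \emph{odd} $b$ your argument is complete and in fact cleaner than what one might expect: the bound $D(m,k)\le\frac{b+1}{2}k-\frac b4$ gives the inequality and its strictness (for $k\ge1$) in one stroke. This is a genuinely different route from the paper, which proves (i) for all $b$ simultaneously by induction on $k$, using the lemma that every $k$ satisfies $k\le jb^p<2k$ for some $j\le[(b+1)/2]$ together with the estimate $s_b(r+jb^p)\le s_b(r)+j$; that induction costs nothing extra in the even case, whereas your level-by-level decomposition buys an explicit error term and a transparent explanation of the extremal examples.

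The problem is that for \emph{even} $b$ your proof is not a proof: you concede that the chain only yields $D(m,k)\le\frac{b+1}{2}k-\frac b4$, which exceeds the required $\frac b2k$ as soon as $k>b/2$, and the proposed repair is only a heuristic. The shortfall is not a boundary effect that a small refinement will absorb: taking $b=2$ and $k=2^L$, the bound $\sum_\ell\min(2^{\ell-2},k^2 2^{-\ell})$ evaluates to $\tfrac32k-\tfrac12$, while the truth is $D(m,k)\le k$, so you must recover a quantity of order $k/2$, i.e.\ a constant \emph{fraction} of your upper bound, distributed across all levels $\ell\le L$. Your observation that the congruences forcing $\tau_\ell= b^\ell/4$ at two distinct levels are incompatible rules out simultaneous \emph{exact} extremality, but says nothing quantitative about how far the other levels fall short; making that precise is essentially the entire content of the $b=2$ case (the subject of \cite{Allaart}), and no argument is given here. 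As it stands, part (i) is proved only for odd $b$, and since $b=2$ is the case needed for the Takagi application, this is a substantive gap rather than a routine omission. (A trivial additional remark: for $k=0$ both sides of \eqref{eq:general-bound} vanish, so the claimed strictness for odd $b$ should be read as applying to $k\ge1$.)
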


(For the case $b=2$, this result was proved previously by the present author; see \cite{Allaart}.)

\bigskip
On the other extreme, the case $l=k$ of \eqref{eq:ternary-inequality} simplifies to
\begin{equation*}
S_3(m+2k)+2S_3(m-k)-3S_3(m)\leq 3k.
\end{equation*}
Equality obtains when $k=m$ (see Lemma \ref{lem:base-multiply} below).
Setting $n=m-k$ and dividing by $3k$, the last inequality can be written as
\begin{equation*}
\bar{s}_3(n,n+3k)\leq \bar{s}_3(n,n+k)+1.
\end{equation*}
This extends to arbitrary $b\geq 2$ as in the following theorem, which states that the average value of $s_b(n)$ over any integer interval of length $bk$ is at most $(b-1)/2$ greater than the average over the first $k$ integers in the interval.

\begin{theorem} \label{thm:b-times-longer}
For each $b\geq 2$ and for all $n,k\geq 0$, we have
\begin{equation}
\bar{s}_b(n,n+bk)\leq \bar{s}_b(n,n+k)+\frac{b-1}{2}.
\label{eq:times-b-average}
\end{equation}
Moreover, equality obtains for each $k$ when $n=0$.
\end{theorem}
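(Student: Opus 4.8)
The plan is to restate \eqref{eq:times-b-average} as an inequality for $S_b$ and then induct on $k$. Multiplying \eqref{eq:times-b-average} by $bk$ and recalling \eqref{eq:averages}, the claim is equivalent to
\begin{equation*}
T_b(n,k):=S_b(n+bk)-b\,S_b(n+k)+(b-1)S_b(n)\ \leq\ \frac{b(b-1)}{2}\,k,\qquad n,k\geq 0,
\end{equation*}
an equality in \eqref{eq:times-b-average} corresponding to an equality here. The ``moreover'' assertion then follows at once from the base‑multiplication identity $S_b(bk)=b\,S_b(k)+\frac{b(b-1)}{2}k$ (Lemma \ref{lem:base-multiply}) together with $S_b(0)=0$, which give $T_b(0,k)=\frac{b(b-1)}{2}k$ for every $k$.

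For the inequality I would use strong induction on $k$, the case $k=0$ being trivial. Write $n=bn_1+n_0$ and $k=bk_1+k_0$ with $n_0,k_0\in\{0,\dots,b-1\}$, so that $n_1=[n/b]$ and $k_1=[k/b]<k$ when $k\geq 1$. Peel off the last base‑$b$ digit of each of the three arguments $n+bk=b(n_1+k)+n_0$, $n=bn_1+n_0$, and $n+k$ (whose base‑$b$ quotient is $n_1+k_1$ if $n_0+k_0<b$ and $n_1+k_1+1$ otherwise) using the elementary recursion
\begin{equation*}
S_b(bq+r)=b\,S_b(q)+\frac{b(b-1)}{2}q+r\,s_b(q)+\frac{r(r-1)}{2},\qquad 0\leq r<b.
\end{equation*}
A short computation shows that the $\frac{b(b-1)}{2}q$‑ and $\frac{r(r-1)}{2}$‑contributions telescope, and one is left with an identity of the shape
\begin{equation*}
T_b(n,k)=b\,T_b(n_1,k_1)+\frac{b(b-1)}{2}k_0+b\sum_{j=0}^{k_0-1}s_b(n_1+bk_1+j)+R,
\end{equation*}
where $R$ is a small expression built from the digits $n_0,k_0$ and from $s_b(n_1),\,s_b(n_1+k_1),\,s_b(n_1+k)$ (its precise form differing slightly in the two carry cases, with $s_b(n_1+k_1+1)$ replacing $s_b(n_1+k_1)$ in the second).

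Inserting the inductive bound $T_b(n_1,k_1)\leq\frac{b(b-1)}{2}k_1$ and using $bk_1=k-k_0$, the inequality $T_b(n,k)\leq\frac{b(b-1)}{2}k$ would reduce to showing that the residual part $b\sum_{j=0}^{k_0-1}s_b(n_1+bk_1+j)+R$ is nonpositive. When $n_1=0$ this residual inequality collapses to $-(b-1)n_0\bigl(s_b(k_1)+k_0\bigr)\leq 0$, which holds trivially and incidentally re‑derives the $n=0$ equality.

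The step I expect to be the main obstacle is exactly this residual estimate for $n_1>0$: there the inductive bound $T_b(n_1,k_1)\leq\frac{b(b-1)}{2}k_1$ is far from sharp, so discarding its slack lets the residual overshoot and the inequality as organized above can fail. I see two remedies. The first is to strengthen the inductive hypothesis to $T_b(n,k)\leq\frac{b(b-1)}{2}k-\rho_b(n,k)$ for a nonnegative ``defect'' $\rho_b$ encoding the base‑$b$ carry pattern of $n$ (necessarily $\rho_b(n,k)=0$ for $n=0$; for instance $\rho_b(n,1)=(b-1)(n\bmod b)\bigl(1+v_b([n/b]+1)\bigr)$, where $v_b$ counts trailing base‑$b$ zeros), the task being to pin down the correct $\rho_b$ in general and verify that the recursion propagates it; this is the crux. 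The second is to keep the plain induction but bound the residual terms via the already‑established Theorems \ref{thm:-strong-super-additivity} and \ref{thm:general-bound}: after re‑expanding $T_b(n_1,k)$ over the $b$ consecutive blocks $[n_1+ik_1,n_1+(i+1)k_1)$, $i=0,\dots,b-1$, the quantities that arise are second differences $S_b(m+k_1)+S_b(m-k_1)-2S_b(m)$, which \eqref{eq:general-bound} controls, with \eqref{eq:super-addivity} handling the lower‑order terms. Since for $b=2$ the statement is already Theorem \ref{thm:general-bound}(ii), and for $b=3$ it is the case $l=k$ of Theorem \ref{thm:ternary-inequality}, these serve as guides and consistency checks for the general argument.
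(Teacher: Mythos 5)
Your reduction of \eqref{eq:times-b-average} to $T_b(n,k):=S_b(n+bk)-bS_b(n+k)+(b-1)S_b(n)\leq \frac{b(b-1)}{2}k$ is correct, and the derivation of the $n=0$ equality from Lemma \ref{lem:base-multiply} is fine. But the proof itself is not there: you identify the decisive obstacle yourself (the residual term $b\sum_{j=0}^{k_0-1}s_b(n_1+bk_1+j)+R$ need not be nonpositive when $n_1>0$) and then offer two unexecuted remedies. The first remedy --- strengthening the induction to $T_b(n,k)\leq\frac{b(b-1)}{2}k-\rho_b(n,k)$ --- leaves the defect $\rho_b$ unspecified except in one special case, and you label finding it ``the crux''; that is precisely the part that would constitute the proof. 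The second remedy is also only gestured at, and the natural version of it does not go through: splitting $[n,n+bk)$ into the $b$ consecutive blocks $[n+ik,n+(i+1)k)$ would require $\Sigma_b(n+ik,n+(i+1)k)\leq\Sigma_b(n,n+k)+ik$ for each $i$, and neither \eqref{eq:super-addivity} (a lower bound) nor \eqref{eq:general-bound} (which carries the larger constant $[(b+1)/2]$ rather than increments of size $i$) delivers that. So the proposal as written is a plan with an acknowledged hole, not a proof.

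The paper closes exactly this gap by a combinatorial device you do not have: Theorem \ref{thm:b-ary-array} arranges $0,1,\dots,bk-1$ into a $b\times k$ matrix $[a_{i,j}]$ whose first row is $0,\dots,k-1$ and in which each $a_{i,j}$ dominates $a_{1,j}=j-1$ digit-wise, with $a_{i,j}-a_{1,j}$ a sum of exactly $i-1$ powers of $b$. Then \eqref{eq:adding-r-powers} gives $s_b(a_{i,j}+n)\leq s_b(n+j-1)+i-1$ for \emph{every} $n$, and summing over the matrix yields $\Sigma_b(n,n+bk)\leq b\,\Sigma_b(n,n+k)+\frac{b(b-1)}{2}k$ in one line, with no induction on $k$ and no carry analysis. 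The existence of the arrangement is what replaces your unknown defect function $\rho_b$: it packages the ``correct'' partition of $\{k,\dots,bk-1\}$ into $b-1$ classes, each matched bijectively to $\{0,\dots,k-1\}$ by adding the right number of powers of $b$. If you want to rescue your approach, the realistic path is to prove a statement of that kind (the paper does it via the peg-shifting Lemma \ref{lem:arrange-pegs}) rather than to hunt for $\rho_b$.
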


Note that for $b=2$, \eqref{eq:general-bound-av} and \eqref{eq:times-b-average} give the same result.

The proofs of Theorems \ref{thm:-strong-super-additivity}-\ref{thm:b-times-longer} are given in the next section. Two of the theorems are then used in Section \ref{sec:application} to derive a pair of inequalities for a sequence of Takagi-like functions.

\section{Proofs of the main results}

Throughout this section, let $b\geq 2$ be fixed. It is convenient to introduce the notation
\begin{equation}
\Sigma_b(s,t):=\sum_{r=s}^{t-1} s_b(r)=S_b(t)-S_t(s), \qquad s<t.
\label{eq:Sigma-notation}
\end{equation}
Thus, $\Sigma_b(s,t)$ is the sum of all the $b$-ary digits needed to write the block of consecutive integers $s,s+1,\dots,t-1$. When there is no confusion possible about the base $b$, the subscript $b$ will be frequently dropped throughout this paper.

We first state a useful lemma.

\begin{lemma} \label{lem:complementation}
For any nonnegative integers $p, j, k$ and $n$ with $0\leq k\leq n\leq jb^p$ and $j\leq b$,
\begin{equation*}
\Sigma(jb^p-k,jb^p)-\Sigma(n-k,n)=\Sigma(jb^p-n,jb^p-n+k)-\Sigma(0,k).
\end{equation*}
\end{lemma}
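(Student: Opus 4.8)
The plan is to prove the identity by exploiting a digit-complementation symmetry: for integers $r$ in the range $0 \le r < jb^p$ with $j \le b$, the number $jb^p - 1 - r$ has a clean relationship to $r$ in base $b$. Specifically, writing $r = \sum_{i=0}^{p-1} \alpha_i(r) b^i + (\text{higher terms})$, one checks that for $0 \le r < jb^p$ the top "digit" (the coefficient of $b^p$, read as an integer in $\{0,1,\dots,j-1\}$ together with the lower $p$ digits) complements so that $s_b(jb^p - 1 - r) = (j-1) + p(b-1) - s_b(r)$ for all such $r$. In other words, the map $r \mapsto jb^p - 1 - r$ is an involution on $\{0,1,\dots,jb^p-1\}$ under which the digit sum is replaced by the constant $C_{j,p} := (j-1) + p(b-1)$ minus itself. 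I would state and prove this as the one genuine computational step.

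Granting that, the proof is pure bookkeeping. First I would rewrite each of the four $\Sigma$-blocks in the claimed identity as a sum of $s_b$ over an interval of length $k$, then apply the involution $r \mapsto jb^p - 1 - r$ to re-index. The block $\Sigma(jb^p - k, jb^p) = \sum_{r=jb^p-k}^{jb^p-1} s_b(r)$ becomes, under the substitution $r' = jb^p - 1 - r$, the sum $\sum_{r'=0}^{k-1}\bigl(C_{j,p} - s_b(r')\bigr) = k\,C_{j,p} - \Sigma(0,k)$. Similarly $\Sigma(n-k,n) = \sum_{r=n-k}^{n-1} s_b(r)$ maps, via the same substitution, to $\sum_{r'=jb^p-n}^{jb^p-n+k-1}\bigl(C_{j,p} - s_b(r')\bigr) = k\,C_{j,p} - \Sigma(jb^p - n, jb^p - n + k)$. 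Subtracting, the $k\,C_{j,p}$ terms cancel and we obtain exactly
\[
\Sigma(jb^p-k,jb^p) - \Sigma(n-k,n) = \Sigma(jb^p - n, jb^p - n + k) - \Sigma(0,k),
\]
which is the assertion. The hypotheses $k \le n \le jb^p$ and $j \le b$ are precisely what is needed to guarantee that all four intervals lie inside $[0, jb^p)$, so that the involution is applicable throughout.

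The main obstacle is the digit-complementation formula $s_b(jb^p - 1 - r) = (j-1) + p(b-1) - s_b(r)$ for $0 \le r < jb^p$, and I want to make sure it really holds when $j < b$ (the case $j = b$ is the familiar "all nines" complementation for $b^{p+1} - 1 - r$). The point is that if $r < jb^p$ then $r$ has the form $r = q b^p + \rho$ with $0 \le q \le j-1$ and $0 \le \rho < b^p$; then $jb^p - 1 - r = (j-1-q)b^p + (b^p - 1 - \rho)$, and since $0 \le j-1-q \le j-1 < b$ this is a legitimate base-$b$ expansion whose digit sum is $(j-1-q) + \bigl(p(b-1) - s_b(\rho)\bigr) = C_{j,p} - s_b(r)$, using $s_b(r) = q + s_b(\rho)$. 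Once this is nailed down, everything else is routine re-indexing; no further estimates are required.
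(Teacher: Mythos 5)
Your proposal is correct and follows exactly the paper's argument: the paper's entire proof is the observation that $s_b(jb^p-r-1)+s_b(r)=(b-1)p+j-1$ is independent of $r$ for $0\leq r<jb^p$, which is precisely your complementation formula, and the re-indexing you carry out is the ``follows at once'' step the paper leaves implicit. Your verification of the complementation identity via $r=qb^p+\rho$ is sound, including the case $j<b$.
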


\begin{proof}
This follows at once since $s_b(jb^p-r-1)+s_b(r)=(b-1)p+j-1$, independent of $r$, for $0\leq r<jb^p$.
\end{proof}

We will also use the following, easily verified fact: for any nonnegative integers $n$ and $k$, $s_b(n+b^k)\leq s_b(n)+1$. Applying this repeatedly, we obtain the useful estimate
\begin{equation}
s_b\left(n+\sum_{i=1}^k b^{p_i}\right)\leq s_b(n)+k, \qquad n\in\ZZ_+, \quad p_1,\dots,p_k\in\ZZ_+.
\label{eq:adding-r-powers}
\end{equation}

\begin{proof}[Proof of Theorem \ref{thm:-strong-super-additivity}]
The statement is obvious for the case $m=n=0$. We proceed by induction on $m+n$. Let $N\in\NN$, and assume 
%In order to proceed by induction, we introduce the partial order $\prec$ on $\ZZ_+\times \ZZ_+$ by saying that $(m,n)\prec(m',n')$ if and only if $m\leq m'$, $n\leq n'$, and either $m<m'$ or $n<n'$.
%Now let $(k,l)\in(\ZZ_+\times \ZZ_+)\backslash \{(0,0)\}$, and suppose 
\eqref{eq:super-addivity} holds for all pairs $(m,n)$ with $m+n<N$. Suppose $m$ and $n$ are such that $m+n=N$. By symmetry we may assume that $m\geq n$. 
In terms of the notation \eqref{eq:Sigma-notation}, we must show that
\begin{equation*}
\Sigma(m,m+n)\geq \Sigma(0,n)+n.
\end{equation*}
This is trivial when $n=0$, so assume $n\geq 1$.
We consider two cases:

\bigskip
{\em Case 1.} The range $\{m+1,\dots,m+n-1\}$ does not contain a power of $b$. In this case, there is $p\in\ZZ_+$ such that $b^p\leq m\leq m+n-1<b^{p+1}$. So we can subtract 1 from the first digit of each number $m,\dots,m+n-1$ and obtain
\begin{align*}
\Sigma(m,m+n)&=n+\Sigma(m-b^p,m+n-b^p)\\
&=n+S(m+n-b^p)-S(m-b^p)\\
&\geq n+S(n)+\min\{m-b^p,n\}\\
&\geq n+\Sigma(0,n).
\end{align*}
%where the first inequality follows from the induction hypothesis.

%\bigskip
{\em Case 2.} The range $\{m+1,\dots,m+n-1\}$ contains a power of $b$; say $m+j=b^p$, where $1\leq j<n$. Since subtracting $b^p$ maps $\{m+j,\dots,m+n-1\}$ onto $\{0,\dots,n-j-1\}$, we see that
\begin{equation}
\Sigma(m+j,m+n)=\Sigma(0,n-j)+n-j.
\label{eq:first-part}
\end{equation}
On the other hand, by Lemma \ref{lem:complementation},
\begin{equation*}
\Sigma(m,m+j)-\Sigma(n-j,n)=\Sigma(m+j-n,m+2j-n)-\Sigma(0,j).
\end{equation*}
Since $j<n$, the induction hypothesis implies
\begin{align*}
\Sigma(m+j-n,m+2j-n)&=S((m+j-n)+j)-S(m+j-n)\\
&\geq S(j)+j=\Sigma(0,j)+j.
\end{align*}
Hence,
\begin{equation}
\Sigma(m,m+j)-\Sigma(n-j,n)\geq j.
\label{eq:second-part}
\end{equation}
Combining \eqref{eq:first-part} and \eqref{eq:second-part} yields
\begin{align*}
\Sigma(m,m+n)-\Sigma(0,n)&=\Sigma(m+j,m+n)+\Sigma(m,m+j)-\Sigma(0,n)\\
&=\Sigma(m,m+j)-\Sigma(n-j,n)+n-j\\
&\geq j+(n-j)=n,
\end{align*}
as required.
\end{proof}

\begin{remark}
{\rm
The inequality \eqref{eq:super-addivity} is sharp in the sense that equality holds whenever $n$ is a power of $b$ and $m<n$.
}
\end{remark}

The following identity is well known for the case $b=2$; see McIlroy \cite[eq. (4a)]{McIlroy}.

\begin{lemma} \label{lem:base-multiply}
For each $m\in\NN$,
\begin{equation*}
S_b(bm)=bS(m)+\frac{b(b-1)m}{2}.
\end{equation*}
\end{lemma}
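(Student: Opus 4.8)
The plan is to count, over the block of integers $0,1,\dots,bm-1$, how many digits are contributed, by grouping these $bm$ integers into $m$ consecutive blocks of length $b$. Write each integer $n$ in the range $0\le n<bm$ uniquely as $n=bq+r$ with $0\le q<m$ and $0\le r<b$. Since $b\mid bq$, the base-$b$ representation of $bq$ ends in a zero digit, and for $0\le r<b$ we have $s_b(bq+r)=s_b(bq)+r=s_b(q)+r$ (adding $r<b$ to a number whose last digit is $0$ simply places $r$ in the units position and changes no other digit). Summing over all admissible $q$ and $r$ gives
\begin{equation*}
S_b(bm)=\sum_{q=0}^{m-1}\sum_{r=0}^{b-1}\bigl(s_b(q)+r\bigr)=b\sum_{q=0}^{m-1}s_b(q)+m\sum_{r=0}^{b-1}r=bS_b(m)+\frac{b(b-1)m}{2},
\end{equation*}
which is exactly the claimed identity.

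The key steps in order are: (1) introduce the division-with-remainder parametrization $n=bq+r$ and observe it is a bijection between $\{0,\dots,bm-1\}$ and $\{0,\dots,m-1\}\times\{0,\dots,b-1\}$; (2) establish the digit identity $s_b(bq+r)=s_b(q)+r$ for $0\le r<b$, which is the one genuinely substantive point; (3) perform the double summation, separating the $s_b(q)$ term (each value of $q$ appearing $b$ times) from the $r$ term (each remainder appearing $m$ times), and use $\sum_{r=0}^{b-1}r=b(b-1)/2$.

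I expect step (2) — the digit identity — to be the only place requiring any care, though it is quite short. The point is that multiplication by $b$ shifts the base-$b$ expansion: if $q=\sum_{j\ge 0}\alpha_j(q)b^j$ then $bq=\sum_{j\ge 0}\alpha_j(q)b^{j+1}$, so the units digit of $bq$ is $0$ and the higher digits are exactly the digits of $q$. Adding $r$ with $0\le r\le b-1$ therefore produces no carry: it replaces the units digit $0$ by $r$ and leaves all other digits unchanged. Hence $s_b(bq+r)=r+\sum_{j\ge 0}\alpha_j(q)=r+s_b(q)$. Alternatively, one can prove the whole lemma by induction on $m$ using the telescoping relation $S_b(bm)-S_b(b(m-1))=\sum_{r=0}^{b-1}s_b(b(m-1)+r)=bs_b(m-1)+b(b-1)/2$ together with $S_b(m)-S_b(m-1)=s_b(m-1)$; but the direct counting argument above is cleaner and avoids an induction.
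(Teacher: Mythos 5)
Your proof is correct and is essentially the paper's own argument: both rest on the identity $s_b(bq+r)=s_b(q)+r$ for $0\le r<b$ and then sum over $q$ and $r$. You simply spell out the digit-shift justification that the paper leaves implicit.
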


\begin{proof}
For each number $j\in\{0,\dots,m-1\}$ and $r\in\{0,\dots,b-1\}$, $s_b(bj+r)=s_b(j)+r$. Summing over $r$ and then over $j$ gives the lemma.
\end{proof}

\begin{proof}[Proof of Theorem \ref{thm:ternary-inequality}]
Note first that \eqref{eq:ternary-inequality} can be stated equivalently as
\begin{equation}
\Sigma(m,m+k+l)-\Sigma(m-k,m)-\Sigma(m-l,m)\leq 2k+l,
\label{eq:ternary-inequality-sigma-form}
\end{equation}
where the omitted subscript is understood to be $b=3$.
We use induction on the sum $m+k+l$. The statement is trivial for all $m$ when $k=l=0$. Let $N\in\NN$, and assume \eqref{eq:ternary-inequality-sigma-form} holds whenever $m+k+l<N$. Suppose $(k,l,m)$ is a triple with $0\leq l\leq k\leq m$ and $m+k+l=N$.
%pairs $(k,l)$ with the partial order $\prec$ defined in the proof of Theorem \ref{thm:-strong-super-additivity}. The statement is trivial for $(k,l)=(0,0)$, so let $(k,l)\in\ZZ_+\times \ZZ_+\backslash\{(0,0)\}$ with $k\geq l$, and assume the statement holds for all pairs $(k',l')$ with $(k',l')\prec(k,l)$ and for all $m\geq k'$.
If $m\leq 2(k+l)$, then $2m-k-l\leq m+k+l$ and so a double application of Theorem \ref{thm:-strong-super-additivity} gives
\begin{align*}
S(3m)&\geq S(m+k+l)+S(2m-k-l)+(2m-k-l)\\
&\geq S(m+k+l)+S(m-k)+S(m-l)+(m-k)+(2m-k-l)\\
&=S(m+k+l)+S(m-k)+S(m-l)+3m-(2k+l).
\end{align*}
On the other hand, $S(3m)=3S(m)+3m$ by Lemma \ref{lem:base-multiply}, and combining these results gives \eqref{eq:ternary-inequality}. In the remainder of the proof we may therefore assume that $m>2(k+l)$. Since $l\leq k$, this implies that
\begin{equation}
m+k+l<2(m-l),
\label{eq:at-most-double}
\end{equation}
and
\begin{equation}
m+k+l<3(m-k).
\label{eq:at-most-triple}
\end{equation}
Hence, the range $\{m-k,\dots,m+k+l-1\}$ contains at most one power of $3$.

\bigskip
\noindent {\em Case 1.} The range $\{m-k+1,\dots,m-1\}$ does not contain a power of $3$. Then there is $i\in\{1,2\}$ and $p\in\ZZ_+$ such that $3^p i\leq m-k\leq m-1<3^p(i+1)$, so the numbers $m-k,\dots,m-1$ all have leading ternary digit $i$. Hence,
\begin{equation*}
\Sigma(m-k,m)=\Sigma(m-k-3^p i,m-3^p i)+ki,
\end{equation*}
and likewise,
\begin{equation*}
\Sigma(m-l,m)=\Sigma(m-l-3^p i,m-3^p i)+li.
\end{equation*}
On the other hand, for each $n\in\NN$ we have $s(n+3^p i)\leq s(n)+i$ in view of \eqref{eq:adding-r-powers}, and therefore
\begin{equation*}
\Sigma(m,m+k+l)\leq\Sigma(m-3^p i,m+k+l-3^p i)+(k+l)i.
\end{equation*}
Hence, setting $m'=m-3^p i$, we have
\begin{align*}
\Sigma(m,m+k+l)-&\Sigma(m-k,m)-\Sigma(m-l,m)\\
&\leq \Sigma(m',m'+k+l)-\Sigma(m'-k,m')-\Sigma(m'-l,m')\\
&\leq 2k+l,
\end{align*}
where the last inequality uses the induction hypothesis.

\bigskip
\noindent {\em Case 2.} The range $\{m-k+1,\dots,m-1\}$ contains a power of $3$. Say $m-j=3^p$, where $0<j<k$. We consider two subcases:

\bigskip
{\em Case 2a.} The power of $3$ is among $m-k+1,\dots,m-l$, so $l\leq j<k$. By the induction hypothesis (with $j$ in place of $k$),
\begin{equation}
\Sigma(m,m+j+l)-\Sigma(m-j,m)-\Sigma(m-l,m)\leq 2j+l.
\label{eq:first-half}
\end{equation}
Next, placing a digit ``2" in front of the numbers $m-k,\dots,m-j-1$ increases their digital sums by exactly 2, so that
\begin{equation*}
\Sigma(m-k,m-j)=\Sigma(m-k+2\cdot 3^p,m-j+2\cdot 3^p)-2(k-j).
\end{equation*}
By \eqref{eq:at-most-triple}, the numbers $m+j+l,\dots,m+k+l-1$ are strictly smaller than $3^{p+1}$. Hence, by Theorem \ref{thm:-strong-super-additivity} and Lemma \ref{lem:complementation},
\begin{equation*}
\Sigma(m-k+2\cdot 3^p,m-j+2\cdot 3^p)\geq \Sigma(m+j+l,m+k+l),
\end{equation*}
and so
\begin{equation}
\Sigma(m-k,m-j)\geq \Sigma(m+j+l,m+k+l)-2(k-j).
\label{eq:second-half}
\end{equation}
Combining \eqref{eq:first-half} and \eqref{eq:second-half} gives \eqref{eq:ternary-inequality-sigma-form}.

\bigskip
{\em Case 2b.} The power of $3$ is among $m-l+1,\dots,m-1$, so $0<j<l$. By the induction hypothesis (with $j$ in place of $l$),
\begin{equation}
\Sigma(m,m+k+j)-\Sigma(m-k,m)-\Sigma(m-j,m)\leq 2k+j.
\label{eq:2b-first-half}
\end{equation}
Now by \eqref{eq:at-most-double}, the first digit of $m+k+l-1$ must be a ``1". We can now place a ``1" in front of each number $m-l,\dots,m-j-1$ and use Theorem \ref{thm:-strong-super-additivity} and Lemma \ref{lem:complementation} to obtain
\begin{align*}
\Sigma(m-l,m-j)&=\Sigma(m-l+3^p,m-j+3^p)-(l-j)\\
&\geq \Sigma(m+k+j,m+k+l)-(l-j).
\end{align*}
Along with \eqref{eq:2b-first-half}, this yields \eqref{eq:ternary-inequality-sigma-form}.
\end{proof}

The proof of Theorem \ref{thm:general-bound} uses the following lemma, whose easy proof is left as an exercise for the interested reader.

\begin{lemma} \label{lem:power-of-b}
For each $k\in\NN$, there exist integers $p\geq 0$ and $j\leq[(b+1)/2]$ such that $k\leq jb^p<2k$.
\end{lemma}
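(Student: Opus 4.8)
The plan is to produce, for each $k$, the desired pair $(p,j)$ by a greedy choice: take $p$ to be the largest exponent with $b^p \leq k$, and then set $j := \lceil k/b^p \rceil$, the least integer with $jb^p \geq k$. I would then check the two required properties. First, $jb^p \geq k$ holds by construction. Second, to see $jb^p < 2k$, note that by maximality of $p$ we have $b^p \leq k < b^{p+1}$, so $k/b^p \in [1,b)$; hence $j = \lceil k/b^p\rceil \leq b$, and more importantly $j-1 < k/b^p$, i.e. $(j-1)b^p < k$, which gives $jb^p < k + b^p \leq 2k$ since $b^p \leq k$. So the interval condition $k \leq jb^p < 2k$ is automatic; the only real content is the bound $j \leq [(b+1)/2]$.

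For that bound I would split according to whether $p=0$ or $p\geq 1$. If $p \geq 1$, then $b^p \geq b \geq \lceil (b+1)/2\rceil$ for $b\geq 2$ — indeed $b \geq (b+1)/2$ is equivalent to $b\geq 1$ — and a short computation shows $j = \lceil k/b^p\rceil$ cannot exceed $[(b+1)/2]$ here, because if it did we would have $(j-1)b^p \geq ([(b+1)/2])\,b^p \geq ([(b+1)/2])\,b > k$ using $k < b^{p+1}$ and $[(b+1)/2] \geq b/2$... — this is where a little care is needed, and I would instead argue more cleanly: since $k < b^{p+1}$, we get $k/b^p < b$, so $j \leq b$; but we want the sharper $j \leq [(b+1)/2]$. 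The trick is that we are free to choose $p$ \emph{one smaller} when $j$ comes out too large. Concretely: if the greedy $j$ exceeds $[(b+1)/2]$, replace $p$ by $p+1$ (legal only in the $p=0$-type situation) — so really the cleanest route is a direct argument on $k$ itself, outlined next.

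The cleanest approach: given $k\in\NN$, choose $p\geq 0$ maximal with $b^p\leq k$, and now choose $j$ to be the least integer with $jb^p \geq k$ \emph{if} that $j$ is at most $[(b+1)/2]$; otherwise $[(b+1)/2]b^p < k < b^{p+1} \leq 2[(b+1)/2]b^p$ (using $2[(b+1)/2]\geq b$), so $jb^p:=b^{p+1}=b\cdot b^p$ satisfies $k \leq b^{p+1} < 2k$ and we take the pair $(p+1,1)$ with $1\leq[(b+1)/2]$ trivially. Thus in every case we exhibit a valid pair. The main obstacle, such as it is, is purely bookkeeping: verifying the inequality $2[(b+1)/2] \geq b$ (true since $[(b+1)/2] \geq b/2$ for all $b$, with equality when $b$ is even) and confirming that in the ``otherwise'' branch the bounds $k \leq b^{p+1} < 2k$ both hold — the left from $b^{p+1}\geq k$ by maximality-failure of $p+1$... wait, that needs $b^{p+1}\geq k$ which is exactly $p$ being maximal. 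So both inequalities follow from the case hypothesis $[(b+1)/2]b^p < k$ together with $b^p \leq k < b^{p+1}$. I expect the whole argument to fit in a few lines once the case split is set up correctly.
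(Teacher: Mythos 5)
Your argument is correct. The paper leaves this lemma's proof as an exercise, so there is no official proof to compare against, but your final ``cleanest approach'' paragraph is a complete and valid argument: taking $p$ maximal with $b^p\leq k$ and $j=\lceil k/b^p\rceil$ always gives $k\leq jb^p<2k$ (since $(j-1)b^p<k$ and $b^p\leq k$), and in the one problematic case $j>[(b+1)/2]$ the switch to the pair $(p+1,1)$ works because $k<b^{p+1}=b\cdot b^p\leq 2[(b+1)/2]\,b^p<2k$. This is surely the intended proof; the only stylistic criticism is that the first two paragraphs of false starts should be deleted, as the third paragraph stands on its own.
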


\begin{proof}[Proof of Theorem \ref{thm:general-bound}]
Fix $m$. We use induction on $k$. The statement is trivial when $k=0$, so let $1\leq l\leq m$ and assume \eqref{eq:general-bound} holds for all $k<l$, with strict inequality in case $b$ is odd. By Lemma \ref{lem:power-of-b}, there exist integers $p\geq 0$ and $j\leq[(b+1)/2]$ such that $l\leq jb^p<2l$. Thus for each $r\in\{m-l,\dots,m+l-jb^p-1\}$, we have $r<m$, $r+jb^p\in\{m,\dots,m+l-1\}$ and, by \eqref{eq:adding-r-powers}, $s_b(r+jb^p)\leq s_b(r)+j$. Hence,
\begin{align}
\Sigma(m-l+jb^p,m+l)-\Sigma(m-l,m+l-jb^p)&\leq j(2l-jb^p) \notag \\
&\leq \left[\frac{b+1}{2}\right](2l-jb^p).
\label{eq:direct-part}
\end{align}
And the induction hypothesis applied to $k=jb^p-l$ gives
\begin{equation}
\Sigma(m,m-l+jb^p)-\Sigma(m+l-jb^p,m)\leq \left[\frac{b+1}{2}\right](jb^p-l),
\label{eq:induction-part}
\end{equation}
since $jb^p-l<l$. Adding inequalities \eqref{eq:direct-part} and \eqref{eq:induction-part} yields 
\begin{equation}
\Sigma(m,m+l)-\Sigma(m-l,m)\leq \left[\frac{b+1}{2}\right]l,
\label{eq:final-result}
\end{equation}
so \eqref{eq:general-bound} holds also for $k=l$. Statement (ii) of the theorem follows immediately from \eqref{eq:general-bound} by rearranging terms and dividing by $2k$.

We next demonstrate strict inequality when $b$ is odd. Assume first that $l$ is of the form $l=jb^p$. (This includes the case $l=1$.) If $j<(b+1)/2$ we have strict inequality in \eqref{eq:direct-part}, so assume that $j=(b+1)/2$. But then $2l>b^{p+1}$, so we can replace $p$ with $p+1$ and $j$ with $1$ in the induction argument above, and once again obtain strict inequality in \eqref{eq:direct-part}, since $1<(b+1)/2$ for odd $b$.

When $l$ is not of the form $jb^p$, the induction hypothesis is used with $k=jb^p-l>0$, giving strict inequality in \eqref{eq:induction-part}. Thus, in both cases, we have strict inequality in \eqref{eq:final-result}.

Finally, we show that the inequality is sharp. For even $b$, take $m=k=b^n/2$ for any $n\in\NN$. It is easy to calculate inductively, using Lemma \ref{lem:base-multiply}, that
\begin{equation*}
S_b(b^n)-2S_b\left(\frac{b^n}{2}\right)=\frac{b^{n+1}}{4}=\left[\frac{b+1}{2}\right]\cdot\frac{b^n}{2},
\end{equation*} 
obtaining equality in \eqref{eq:general-bound}. (The base case $n=1$ is left as an exercise for the interested reader.) When $b$ is odd, the computation is more tedious. Here we take $m=k=k_n:=(b^n-1)/2$, and claim that
\begin{equation}
S_b(2k_n)-2S_b(k_n)=\frac{b+1}{2}k_n-\frac{(b-1)n}{2},
\label{eq:difference-formula}
\end{equation}
so that
\begin{equation*}
\frac{S_b(2k_n)-2S_b(k_n)}{k_n}\to \frac{b+1}{2}=\left[\frac{b+1}{2}\right], \qquad\mbox{as $n\to\infty$}.
\end{equation*}
To derive \eqref{eq:difference-formula} we start with the well-known observation that, for any $b$,
\begin{equation*}
S_b(b^n)=\frac{nb^n(b-1)}{2}, \qquad n\in\NN.
\end{equation*}
From this, we obtain
\begin{equation}
S_b(2k_n)=S_b(b^n)-s_b(b^n-1)=\frac{nb^n(b-1)}{2}-n(b-1).
\label{eq:twice-k}
\end{equation}
The computation of $S_b(k_n)$ may be done inductively, using the recursion $k_{n+1}=bk_n+(b-1)/2$. For $0\leq m<b$ and $k\in\NN$ we have
\begin{equation*}
S_b(bk+m)=S_b(bk)+\sum_{j=0}^{m-1}s_b(bk+j),
\end{equation*}
and since $s_b(bk+j)=s_b(k)+j$ for $0\leq j<b$, this leads via Lemma \ref{lem:base-multiply} to a recursion for $S_b(k_n)$, noting that $s_b(k_n)=n(b-1)/2$. One can then inductively verify the formula
\begin{equation*}
S_b(k_n)=\frac{b^n-1}{4}\left(n(b-1)-\frac{b+1}{2}\right).
\end{equation*}
This, together with \eqref{eq:twice-k}, leads after some more manipulations to \eqref{eq:difference-formula}.
\end{proof}

To prove Theorem \ref{thm:b-times-longer}, we will demonstrate a slightly stronger result. Define a partial order $\prec_b$ on $\NN$ by $n\prec_b m$ if and only if $\alpha_i(n;b)\leq \alpha_i(m;b)$ for every $i$.

\begin{theorem} \label{thm:b-ary-array}
Fix $b\geq 2$. For each $k\in\NN$, the numbers $0,1,\dots,bk-1$ can be arranged in a $b\times k$ matrix $A_k=[a_{i,j}]_{i=1,j=1}^{b,k}$ such that:
\begin{enumerate}[(i)]
\item $a_{1,j}=j-1$ for $j=1,\dots,k$;
\item $a_{1,j}\prec_b a_{i,j}$ for $i=1,\dots,b$ and $j=1,\dots,k$; and
\item $a_{i,j}-a_{1,j}$ is the sum of exactly $i-1$ powers of $b$; that is, $s_b(a_{i,j})=s_b(a_{1,j})+i-1$, for $i=1,\dots,b$ and $j=1,\dots,k$.
\end{enumerate}
\end{theorem}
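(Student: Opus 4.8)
The plan is to prove Theorem~\ref{thm:b-ary-array} by induction on $k$, building $A_k$ from the matrices already constructed for smaller indices; this is genuinely stronger than Theorem~\ref{thm:b-times-longer}, because once $A_k$ is in hand one writes $a_{i,j}+n = n+(j-1)+(\text{a sum of }i-1\text{ powers of }b)$ and applies \eqref{eq:adding-r-powers} to get $s_b(a_{i,j}+n)\le s_b(n+j-1)+(i-1)$, so that summing over $i$ and $j$ yields $\Sigma_b(n,n+bk)\le b\,\Sigma_b(n,n+k)+\tfrac{b(b-1)k}{2}$, which is \eqref{eq:times-b-average}, with equality at $n=0$ by property (iii). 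Two cases of the induction are immediate. For $k=1$ use the single column $(0,1,\dots,b-1)$, since $i-1$ is a sum of $i-1$ copies of $b^0$. More generally, if $k=b^m$ is a power of $b$, set $a_{i,j}=(j-1)+(i-1)b^m$: row~$1$ is $0,1,\dots,b^m-1$, the difference $a_{i,j}-a_{1,j}=(i-1)b^m$ is a sum of $i-1$ copies of $b^m$, and since $j-1<b^m$ no carry occurs, so $a_{1,j}\prec_b a_{i,j}$ and $s_b(a_{i,j})=s_b(j-1)+(i-1)$.

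The first real step disposes of the case $b\mid k$. Write $k=bq$ and assume $A_q$ has been built. Replace each column $(a_{1,j},\dots,a_{b,j})$ of $A_q$ by the $b$ columns whose $i$-th entry is $b\,a_{i,j}+r$, one for each $r\in\{0,1,\dots,b-1\}$, and list all $bq$ resulting columns in increasing order of their top entries $b(j-1)+r$. Using the elementary facts that $s_b(bn+r)=s_b(n)+r$ for $0\le r<b$, that $n\prec_b n'$ iff $bn+r\prec_b bn'+r$, and that $b$ times a sum of powers of $b$ is again a sum of powers of $b$, one reads off (i)--(iii) for the enlarged matrix directly from those of $A_q$; and the entries $\{bn+r:0\le n<bq,\ 0\le r<b\}$ are precisely $0,1,\dots,b^2q-1$.

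The substantive case is $b\nmid k$ with $k$ not a power of $b$, and here I would construct $A_k$ directly by ``peeling off'' a block from the top of $\{0,\dots,bk-1\}$. Lemma~\ref{lem:power-of-b} is exactly the tool for locating such a block: choose $p\ge0$ and $1\le j_0\le[(b+1)/2]$ with $k\le j_0b^p<2k$, add $j_0b^p$ (that is, $j_0$ copies of $b^p$) to the tops of a suitable initial segment of columns --- an operation which is carry-free because those tops are $<b^p$ --- and thereby fill the upper portion of $\{0,\dots,bk-1\}$ into those columns; what is left, namely the remaining columns together with the unused small numbers lying in $[k,b^p)$, forms an instance of the same problem on a shorter interval, which one reaches by induction after translating it to begin at $0$ (this translation is precisely the reflection identity of Lemma~\ref{lem:complementation}). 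I expect the genuine obstacle to be the bookkeeping in this last step: one must track several adjacent subintervals of ``column tops'' and of ``still-available numbers,'' check at each stage that they have equal length, that the power of $b$ being inserted never triggers a carry --- this is the one place the argument can fail, and it is exactly where condition~(ii), the relation $\prec_b$, has to be respected --- and that after finitely many peels every element of $\{0,\dots,bk-1\}$ has been used exactly once with all $b$ rows of every column consistently populated. It is almost certainly cleanest to strengthen the induction hypothesis so that it speaks of valid arrangements of a translated interval $[c,c+bk')$ lying inside a single base-$b$ window, so that the peeled-off remainder becomes literally an instance of the hypothesis rather than merely a problem ``of the same shape.''
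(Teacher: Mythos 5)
Your base cases and your reduction for $b\mid k$ are correct and complete: the verification that $s_b(bn+r)=s_b(n)+r$, that $\prec_b$ is preserved under $n\mapsto bn+r$, and that $b$ times a sum of powers of $b$ is again a sum of powers of $b$ does transfer (i)--(iii) from $A_q$ to $A_{bq}$. The derivation of Theorem \ref{thm:b-times-longer} from the matrix is also exactly right. But the theorem's entire combinatorial content sits in the case you leave as an admitted sketch --- $k$ neither a power of $b$ nor a multiple of $b$ (e.g.\ $b=3$, $k=5$, the paper's own example) --- and there you explicitly concede that the decisive verifications (equal lengths of the tracked subintervals, carry-freeness at every peel, every column ending up with all $b$ rows populated) are not carried out. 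That is a genuine gap, not a routine bookkeeping deferral: as stated, one ``peel'' of $j_0b^p$ onto an initial segment of column tops contributes at most one new entry per column, and only to the single row index $j_0+1$, so rows $2,\dots,j_0$ and $j_0+2,\dots,b$ of those columns are unaccounted for; moreover the leftover set (unused numbers in $[k,j_0b^p)$ together with whatever remains above) need not be an interval, so it is not literally an instance of your strengthened hypothesis, and carry-freeness of subsequent peels is exactly what can fail (with $j_0b^p<2k$ one only controls one extra base-$b$ digit, while $b-1$ further entries per column must still be placed). The paper resolves precisely this difficulty with Lemma \ref{lem:arrange-pegs}: it starts from the single row $0,1,\dots,bk-1$ and compacts it by ``permissible $b^r$-shifts'' (move a peg $b^r$ columns left and one row down, subject to a digit condition that forbids carries), proving by induction on $k$-tableaux that the row can always be compacted into a full $b\times k$ rectangle; each number's final row index then automatically records how many powers of $b$ were subtracted, giving (iii), and permissibility gives (ii). Your peeling idea is in the same spirit (it is roughly the shift algorithm run in reverse), but until the tableau-style induction or an equivalent is actually supplied, the proof is incomplete at its one essential point.
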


An example of such an arrangement for $b=3$ and $k=5$ is
\begin{equation*}
A_5=\begin{bmatrix} 0 & 1 & 2 & 3 & 4\\9 & 10 & 11 & 6 & 5\\12 & 13 & 14 & 7 & 8 \end{bmatrix}.
\end{equation*}
Note that the arrangement is by no means unique: in the above example we could interchange $6$ and $12$, or $5$ and $7$, etc.

We prove Theorem \ref{thm:b-ary-array} by describing a simple algorithm for constructing the matrix $A_k$. This requires some terminology and a lemma. Fix $b\geq 2$. Suppose a finite set of pegs are placed in a finite rectangular array of holes. A hole has {\em position} $(i,j)$ if it is the $j$th hole (from the left) in the $i$th row (from the top). For $k\in\ZZ_+$, a $b^k$-{\em shift} is the move of a peg from any position $(i,j)$ with $j>b^k$ to the new position $(i+1,j-b^k)$. In other words, a $b^k$-shift moves a peg $b^k$ columns to the left and one row down. A {\em power shift} is any $b^k$-shift, where $k\in\ZZ_+$. A $b^k$-shift from $(i,j)$ to $(i+1,j-b^k)$ is {\em permissible} if position $(i+1,j-b^k)$ is not yet occupied and there is $l\in\NN$ such that $(l-1)b^{k+1}<j-b^k<j\leq lb^{k+1}$.

\begin{lemma} \label{lem:arrange-pegs}
For any $n\in\NN$, a single row of $n$ pegs can be rearranged by a finite sequence of permissible power shifts into a table of $b$ rows and $\lceil n/b\rceil$ columns so that each column except possibly the last contains $b$ pegs, and in the last column no peg is placed below an empty hole.
\end{lemma}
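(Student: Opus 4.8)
The plan is to argue constructively, by induction on $n$, exhibiting an explicit sequence of permissible power shifts.

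For the base case $n\le b$, all pegs lie in columns $1,\dots,n$ of the single block $\{1,\dots,b\}$ of width $b^{1}$, so a $b^{0}$-shift between any two of these columns is permissible the moment its target hole is empty. I would sweep the pegs from right to left, sending the peg in column $j$ down the diagonal $(1,j)\to(2,j-1)\to\cdots\to(j,1)$; because columns are processed in decreasing order, when column $j$'s peg is moved the only occupied holes below row $1$ are $(j+1,1),\dots,(n,1)$, so every hole along its diagonal is free. This produces a single column of $n$ pegs stacked from the top, which is the required $b\times 1$ table.

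For the inductive step ($n>b$), write $k=\lceil n/b\rceil$. The strategy is to thin the top row down to roughly $k$ pegs with one batch of permissible shifts that drops the ``overflow'' pegs one row down, and then to recognize the remaining configuration as a strictly smaller instance of the lemma. Concretely, I would fix a power $b^{p}$ comparable to $k$ (Lemma \ref{lem:power-of-b}, or simply the choice $b^{p}\le k<b^{p+1}$, is the natural device for keeping shift lengths under control) and apply a family of $b^{p}$-shifts to the pegs sitting to the right of the initial segment that is to survive in row $1$, processing them from right to left so targets stay vacant. The point that makes these shifts permissible is that $n\le bk<b^{p+2}$, so every moved peg and its target remain inside a bounded range of width-$b^{p+1}$ blocks. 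After the batch one is left with a short top row above a shorter second row, which one finishes by the induction hypothesis; here it is convenient to prove a slightly more flexible statement, allowing the input row of pegs to sit in an arbitrary row $i_{0}$ and to be spread downward through rows $i_{0},\dots,b$, so that the batch simply peels off row $i_{0}$ and hands rows $i_{0}+1,\dots,b$ a smaller instance of the same kind (the column counts line up precisely because $n\le bk$).

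I expect the real obstacle to be exactly this bookkeeping inside the inductive step. One must choose $b^{p}$ and the precise set of pegs to move (and by which powers) so that, simultaneously: (i) every individual shift obeys the block condition in the definition of ``permissible''; (ii) the targets can be vacated in some admissible order; and (iii) the residual configuration is genuinely an instance of the (flexible) induction hypothesis, with nothing spilling below row $b$, at most $\lceil n/b\rceil$ columns ever in use, and the last column left stacked from the top. Conditions (i) and (iii) pull against each other, since a longer shift helps get the column count right but threatens to cross a block boundary, and reconciling them forces one to use the base-$b$ digits of $k$ together with the inequality $n\le bk$. I would anticipate a case split according to how the shifted columns fall among the width-$b^{p+1}$ blocks, and those cases would be the bulk of the proof.
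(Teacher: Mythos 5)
Your base case is fine, and your instinct that the difficulty lies in making the block condition for permissibility coexist with the column-count bookkeeping is exactly right --- but you then stop at precisely that point. The inductive step is not carried out: you do not specify which pegs are shifted by which powers, you do not verify that each shift satisfies $(l-1)b^{p+1}<j-b^{p}<j\le lb^{p+1}$ (note that this forces source and target columns into the \emph{same} width-$b^{p+1}$ block, which a peg near the left edge of a block will violate under a $b^{p}$-shift --- ``remaining inside a bounded range of blocks'' is not enough), and you never formulate the ``more flexible statement'' that the residual configuration is supposed to instantiate. Since you yourself flag the case analysis as ``the bulk of the proof,'' what you have is a plan with the central step left open, i.e.\ a genuine gap.

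The missing idea is the choice of induction variable and invariant. The paper does not induct on $n$ and does not try to peel off row $1$ first. It inducts on the scale $k$ with $b^{k-1}<n\le b^{k}$, and the invariant is a \emph{$k$-tableau}: $b$ left-aligned rows ordered by decreasing length, each containing either $0$ or $b^{k}$ pegs except for at most one exceptional row. In a $(k+1)$-tableau every peg lies in columns $1,\dots,b^{k+1}$, so \emph{every} $b^{k}$-shift to an empty hole is automatically permissible (the block condition degenerates to $j>b^{k}$, which is already part of the definition of a $b^{k}$-shift). Performing all such shifts produces a uniformly full prefix of $fb^{k}$ columns followed, in the next $b^{k}$ columns, by a $k$-tableau, and the induction closes with no case split whatsoever. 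If you want to salvage your top-down approach you would essentially have to reinvent this invariant, because ``a short top row above a shorter second row'' is not an instance of the original statement, and the ad hoc repairs needed to align the column counts are exactly the unresolved cases you point to.
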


\begin{proof}
For $k\in\ZZ_+$, let a $k$-{\em tableau} be an arrangement of $b$ rows of pegs (possibly empty), aligned on the left and ordered by decreasing length, with the property that each row except perhaps one contains either zero or $b^k$ pegs. We claim that any $k$-tableau can be arranged by permissible power shifts into a table as described in the lemma. This is trivial for $k=0$, as a $0$-tableau already has the required form. Suppose the claim is true for some arbitrary $k\in\ZZ_+$, and let a $(k+1)$-tableau be given. Then some number $f\geq 0$ of rows (at the top of the table) contain $b^{k+1}$ pegs, row $f+1$ contains some number $m$ of pegs ($0\leq m<b^{k+1}$), and the remaining $b-f-1$ rows are empty. Note that in this tableau all $b^k$-shifts to empty holes are permissible.

Let $l\in\{1,\dots,b\}$ be such that $(l-1)b^k\leq m<lb^k$. After performing all permissible $b^k$-shifts, the tableau is transformed into a new table with:
\begin{itemize}\setlength{\itemsep}{0em}
\item $l-1$ rows of $(f+1)b^k$ pegs; followed by
\item one row of $fb^k+m-(l-1)b^k$ pegs; followed by
\item $b-l$ rows of $fb^k$ pegs.
\end{itemize}
In this new table, each row is at least $fb^k$ long, and columns $fb^k+1,\dots,(f+1)b^k$ form a $k$-tableau, which by the induction hypothesis can be rearranged as required. Together with the first $fb^k$ columns, this gives a rearrangement of the entire $(k+1)$-tableau as required.

The statement of the lemma now follows because a row of $n\geq 2$ pegs can be trivially turned into a $k$-tableau by adding $b-1$ empty rows, where $k$ is the integer such that $b^{k-1}<n\leq b^{k}$.
\end{proof}

\begin{proof}[Proof of Theorem \ref{thm:b-ary-array}]
We may apply Lemma \ref{lem:arrange-pegs} with $n=bk$ to see that the single row containing the numbers $0,1,\dots,bk-1$ in increasing order may be rearranged into a $b\times k$ matrix $[a_{i,j}]$ by permissible power shifts only. Clearly, the first row of this matrix contains the numbers $0,1,\dots,k-1$ in increasing order (since no numbers are ever moved into the first row by power shifts), so (i) is satisfied. We show (ii) by induction on $i$. Note that (ii) is trivial for $i=1$. Fix $j\in\{1,\dots,k\}$, and suppose $a_{1,j}\prec_b a_{i,j}$. The number $a_{i+1,j}$ was last moved from a position in row $i$ by shifting it some distance $b^r$ to the left. Since this was a permissible move, we have $j-1\prec_b j+b^r-1$, in other words, $a_{1,j}\prec_b a_{1,j+b^r}$. But $a_{i+1,j}$ had arrived at its position in row $i$ by a sequence of permissible moves, so by the induction hypothesis, $a_{1,j+b^r}\prec_b a_{i+1,j}$. Hence, $a_{1,j}\prec_b a_{i+1,j}$. This proves (ii). Property (iii) follows from (ii), as clearly $a_{i,j}-a_{1,j}$ is a sum of $i-1$ powers of $b$.
\end{proof}

\begin{proof}[Proof of Theorem \ref{thm:b-times-longer}]
Observe that, in terms of the notation $\Sigma_b(s,t)$, we are to prove that
\begin{equation}
\Sigma_b(n,n+bk)\leq b\Sigma_b(n,n+k)+\frac{b(b-1)}{2}k.
\label{eq:b-times-longer}
\end{equation}
Let $[a_{i,j}]_{i=1,j=1}^{b,k}$ be a matrix satisfying the conclusion of Theorem \ref{thm:b-ary-array}. Note that
\begin{equation*}
\Sigma_b(n,n+k)=\sum_{j=1}^k s_b(n+j-1)=\sum_{j=1}^k s_b(a_{i,j}+n),
\end{equation*}
and
\begin{equation*}
\Sigma_b(n,n+bk)=\sum_{j=1}^{bk}s_b(n+j-1)=\sum_{i=1}^b \sum_{j=1}^k s_b(a_{i,j}+n).
\end{equation*}
By property (iii) of Theorem \ref{thm:b-ary-array} and \eqref{eq:adding-r-powers}, $s_b(a_{i,j}+n)-s_b(a_{1,j}+n)\leq i-1$. Hence,
\begin{align*}
\Sigma_b(n,n+bk)&\leq \sum_{i=1}^b \sum_{j=1}^k\{s_b(a_{1,j}+n)+i-1\}\\
&=b\sum_{j=1}^k s_b(a_{1,j}+n)+k\sum_{i=1}^b (i-1)\\
&=b\Sigma_b(n,n+k)+\frac{b(b-1)}{2}k,
\end{align*}
completing the proof.
\end{proof}

Note that property (ii) of Theorem \ref{thm:b-ary-array} was not needed in the last proof. However, dropping the requirement (ii) from Theorem \ref{thm:b-ary-array} does not appear to lead to a simpler proof, whereas including it adds to the independent interest of that theorem.

\section{Application to approximate convexity} \label{sec:application}

Delange \cite{Delange} introduced the functions
\begin{equation*}
h_b(x)=\sum_{n=0}^\infty b^{-n}g_b(b^n x),
\end{equation*}
where for each $b\geq 2$, $g_b$ is the 1-periodic continuous function defined on $[0,1)$ by
\begin{equation*}
g_b(x)=\int_0^x\left(\frac{b-1}{2}-[bt]\right)\,dt.
\end{equation*}
For the case $b=2$, we have $g_2(x)=(1/2)\dist(x,\ZZ)$, where $\dist(x,\ZZ)$ denotes the distance from $x$ to the nearest integer, and hence $h_2$ is one-half times the Takagi function \cite{Takagi}. The relationship between the Takagi function and the binary digital sum $S_2$ was first established by Trollope \cite{Trollope}. Delange \cite{Delange} generalized this relationship by showing that, for each $n\in\NN$,
\begin{equation}
S_b(n)=\frac{b-1}{2}n\log_b n+nF(\log_b n),
\label{eq:Delange}
\end{equation}
where
\begin{equation*}
F(x)=\frac{b-1}{2}(1-\{x\})-b^{1-\{x\}}h_b(b^{\{x\}-1}),
\end{equation*}
in which $\{x\}:=x-[x]$ denotes the fractional part of $x$. (The function $h$ in Delange's paper is actually $-h_b$; the reason for the present representation is that $h_b$ is actually nonnegative, as is easily verified.) In addition to establishing \eqref{eq:Delange}, Delange \cite{Delange} proves that $h_b$ is nowhere differentiable for each $b\geq 2$.

A different sequence of functions was recently introduced by Lev \cite{Lev}. 
For $b\in\NN$, let $\phi_b(x)=\min\{\dist(x,\ZZ),1/b\}$, and define the function
\begin{equation*}
\omega_b(x):=\sum_{n=0}^\infty b^{-n}\phi_b(b^n x).
\end{equation*}
Lev demonstrates a direct connection between $\omega_b(x)$ and the edge-isoperimetric problem for Cayley graphs of homocyclic groups of exponent $b$. Comparison with Delange's functions shows that $\omega_2=2h_2$, and $\omega_3=h_3$. After that, the two sequences go their separate ways: For $b\geq 4$, there is no 
direct relationship between $h_b$ and $\omega_b$, although $\omega_4=(1/2)\omega_2=h_2$.

For the Takagi function $\omega_2$, Boros \cite{Boros} proved the inequality
\begin{equation}
\omega_2\left(\frac{x+y}{2}\right)\leq \frac{\omega_2(x)+\omega_2(y)}{2}+\frac{|y-x|}{2},
\label{eq:Boros-Pales}
\end{equation}
which had been conjectured by H\'azy and P\'ales \cite{Hazy-Pales}. We will show here that all of Delange's functions satisfy an inequality similar to \eqref{eq:Boros-Pales}.

\begin{theorem} \label{thm:Delange-approx-convex}
Let $b\geq 2$. For all real $x$ and $y$ with $x<y$, we have
\begin{equation}
h_b\left(\frac{x+y}{2}\right)\leq \frac{h_b(x)+h_b(y)}{2}+\frac14\left[\frac{b+1}{2}\right](y-x).
\label{eq:Delange-approx-convex}
\end{equation}
\end{theorem}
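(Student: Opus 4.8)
The plan is to transport the problem to the cumulative digital sum $S_b$ and then invoke Theorem~\ref{thm:general-bound}. The bridge is Delange's identity \eqref{eq:Delange}, which I would first recast as follows: for all integers $N\ge 1$ and all $0\le m\le b^N$,
\[
b^N\, h_b\!\left(\frac{m}{b^N}\right)=\frac{(b-1)Nm}{2}-S_b(m).
\]
For $b^{N-1}<m\le b^N$ this is just \eqref{eq:Delange} with $n=m$: then $\{\log_b m\}=\log_b(m/b^{N-1})$, so the term $b^{1-\{x\}}h_b(b^{\{x\}-1})$ in $F$ equals $(b^N/m)\,h_b(m/b^N)$, and the pieces depending on $\{\log_b m\}$ cancel, leaving an expression affine in $m$. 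The case $m\le b^{N-1}$ follows by iterating the self-similarity $h_b(x)=g_b(x)+b^{-1}h_b(bx)$ and using that $g_b$ is linear on $[0,1/b]$; alternatively the displayed identity can be proved outright by induction on $N$ with the help of Lemma~\ref{lem:base-multiply}.

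Next I would prove \eqref{eq:Delange-approx-convex} for $b$-adic rationals. Take $x=(m-k)/b^N$ and $y=(m+k)/b^N$ with $0\le k\le m$ and $m+k\le b^N$, so that $\frac{x+y}{2}=m/b^N$. Substituting the identity above into \eqref{eq:Delange-approx-convex} and multiplying through by $b^N$, the term $\frac{(b-1)N}{2}(\cdot)$ contributes $\frac{(b-1)Nm}{2}$ to each side and cancels (it is affine in $m$), and \eqref{eq:Delange-approx-convex} reduces to exactly
\[
S_b(m+k)+S_b(m-k)-2S_b(m)\le\left[\frac{b+1}{2}\right]k,
\]
which is Theorem~\ref{thm:general-bound}(i). (Equivalently, rewriting the averages $\bar s_b$ as chord slopes of $h_b$, the $b$-adic case of \eqref{eq:Delange-approx-convex} is a verbatim restatement of Theorem~\ref{thm:general-bound}(ii).) Thus \eqref{eq:Delange-approx-convex} holds for every such $b$-adic triple.

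Then comes density and continuity. The pairs $\bigl((m-k)/b^N,(m+k)/b^N\bigr)$ above are dense in $\{(x,y):0\le x\le y\le 1\}$: given such $x<y$ one takes $m-k=\lfloor b^N x\rfloor$ and $k=\lfloor b^N(y-x)/2\rfloor$ and lets $N\to\infty$. Since $h_b$ is continuous (it is the uniform limit of the continuous partial sums of its defining series), \eqref{eq:Delange-approx-convex} extends to all $0\le x\le y\le 1$, and hence --- by the $1$-periodicity of $h_b$ --- to every $x<y$ with $[x,y]$ contained in a single interval $[n,n+1]$, $n\in\ZZ$.

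The one remaining case, which I expect to be the main obstacle, is that $[x,y]$ straddles an integer (equivalently, the approximating $b$-adic triple straddles a power of $b$), since then periodicity alone does not reduce it to the previous step. I would handle it by running the same approximation with the ``block'' form of the identity, namely $b^N h_b(m/b^N)=\frac{(b-1)Nm}{2}-S_b(m\bmod b^N)-\lfloor m/b^N\rfloor\,S_b(b^N)$, valid for all $m\ge 0$, with $S_b(b^N)=\frac{(b-1)}{2}Nb^N$. The reduced inequality then carries the large quantity $\pm S_b(b^N)$ together with complementary sums of the form $S_b(b^N-j)=\frac{(b-1)N}{2}(b^N-2j)+S_b(j)$ produced by Lemma~\ref{lem:complementation}, and the claim should follow by combining Theorem~\ref{thm:general-bound}(i) for a suitably shifted triple with Theorem~\ref{thm:-strong-super-additivity} to absorb the resulting cross-terms. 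Verifying that the $S_b(b^N)$-sized contributions cancel against one another is the delicate point; up to that bookkeeping, the whole argument rests only on the identity relating $h_b$ at $b$-adic rationals to $S_b$ and on Theorem~\ref{thm:general-bound}.
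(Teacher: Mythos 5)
Your main line of argument is the paper's proof almost verbatim: the identity $b^N h_b(m/b^N)=\frac{(b-1)Nm}{2}-S_b(m)$ is exactly the paper's formula \eqref{eq:b-ary-expression} (the paper obtains it by summing the slopes of the partial sums $h_b^{(n)}$ rather than by unwinding \eqref{eq:Delange} and the self-similarity $h_b(t)=\tfrac{b-1}{2}t+\tfrac1b h_b(bt)$ on $[0,1/b]$, but the content is identical), the observation that the $\frac{(b-1)N}{2}(\cdot)$ terms cancel and the $b$-adic case reduces to Theorem~\ref{thm:general-bound}(i) is the paper's displayed computation, and the conclusion by density and continuity is the same. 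So for pairs with $0\le x<y\le 1$ (equivalently $k\le m$ and $m+k\le b^N$) your argument is correct and coincides with the paper's.

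Where you diverge is the final paragraph. The paper does not isolate the case where $[x,y]$ straddles an integer: it simply asserts that every pair $x<y$ is a limit of pairs of the form \eqref{eq:b-adic-rational} and invokes continuity. Your instinct that something extra is needed there is sound --- \eqref{eq:b-ary-expression} does require $0\le k\le b^n$, since $h_b$ is $1$-periodic (for instance $b=2$, $n=1$, $k=3$ gives $-\tfrac14$ on the right of \eqref{eq:b-ary-expression} while $h_2(3/2)=h_2(1/2)=\tfrac14$), so the verification via Theorem~\ref{thm:general-bound}(i) genuinely covers only $y\le 1$, and the paper's one-sentence density claim glides over exactly the point you flag. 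However, your proposed repair via the ``block form'' of the identity is left as a sketch with an explicitly unverified cancellation of the $S_b(b^N)$-sized terms, so it does not close that case either; as written, your proposal establishes the inequality on the same set of pairs that the paper's computation actually covers, and no more. If you want to go beyond the paper here, you would need to carry out that bookkeeping (or find a separate argument for $[x,y]\not\subset[n,n+1]$); note that the crude bound $h_b\le\max g_b\cdot\frac{b}{b-1}$ disposes of $y-x$ large but not of the intermediate range, so the case is not vacuous.
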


For $\omega_3=h_3$, Lev \cite[Theorem 3]{Lev} proves the following interesting inequality, which develops the Boros-Pales inequality in a different but equally natural direction.

\begin{theorem}[Lev, 2012] \label{thm:lev}
For all real $x$, $y$ and $z$ with $x\leq y\leq z$, we have
\begin{equation}
h_3\left(\frac{x+y+z}{3}\right)\leq \frac{h_3(x)+h_3(y)+h_3(z)}{3}+\frac13(z-x).
\label{eq:ternary-approximate-convexity}
\end{equation}
\end{theorem}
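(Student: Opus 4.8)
The plan is to deduce \eqref{eq:ternary-approximate-convexity} from Theorem~\ref{thm:ternary-inequality} by means of Delange's formula \eqref{eq:Delange}. The key is the identity
\begin{equation}
S_3(n)=nN-3^N h_3\!\left(\frac{n}{3^N}\right),\qquad 0\le n\le 3^N,
\label{eq:trollope-ternary}
\end{equation}
the exact ternary analogue of Trollope's identity relating $S_2$ to the Takagi function. To obtain it I would note that for $n\in(3^{N-1},3^N)$ one has $\{\log_3 n\}=\log_3 n-(N-1)$, hence $3^{\{\log_3 n\}-1}=n/3^N$ and $3^{1-\{\log_3 n\}}=3^N/n$; plugging this into \eqref{eq:Delange} (with $b=3$) cancels the $n\log_3 n$ terms and produces \eqref{eq:trollope-ternary}. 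The endpoint $n=3^N$ comes from $S_3(3^N)=N3^N$ and $h_3(1)=0$, while the range $n\le 3^{N-1}$ follows by downward induction on $N$ from the functional equation $h_3(x)=g_3(x)+\tfrac13h_3(3x)$, using that $g_3(t)=t$ on $[0,\tfrac13]$. (Alternatively \eqref{eq:trollope-ternary} can be proved directly by induction on $N$ using Lemma~\ref{lem:base-multiply}.) I will also use the elementary facts that on $[0,1]$ one has $g_3(t)=\min\{t,1-t,\tfrac13\}$, so that $g_3$ is concave there and $0\le h_3\le\tfrac12$, and that $h_3$ is $1$-periodic and even.

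Next comes a chain of reductions of \eqref{eq:ternary-approximate-convexity}. By continuity of $h_3$ and density it suffices to treat triadic rationals $x\le y\le z$. The inequality is invariant under $(x,y,z)\mapsto(x+t,y+t,z+t)$ with $t\in\ZZ$ (periodicity of $h_3$) and under $(x,y,z)\mapsto(-z,-y,-x)$ (evenness of $h_3$); using the reflection we may assume the median satisfies $y\le\tfrac12(x+z)$. Since $0\le h_3\le\tfrac12$, \eqref{eq:ternary-approximate-convexity} holds trivially when $z-x\ge 2$, so we may assume $z-x<2$, and then an integer translation brings us to $0\le x\le y\le z<3$.

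Fix such a triple. For $N$ large, $a:=3^Nx$, $c:=3^Ny$, $d:=3^Nz$ and $\mathsf m:=3^{N-1}(x+y+z)=\tfrac13(a+c+d)$ are integers; set $\mathsf k:=\mathsf m-a$ and $\mathsf l:=\mathsf m-c$. A short computation gives $0\le\mathsf l\le\mathsf k\le\mathsf m$ (with $\mathsf l\ge0$ using $y\le\tfrac12(x+z)$ and $\mathsf k\le\mathsf m$ using $x\ge0$), as well as $\mathsf m-\mathsf k=a$, $\mathsf m-\mathsf l=c$, $\mathsf m+\mathsf k+\mathsf l=d$ and $2\mathsf k+\mathsf l=d-a$; also $a,c,d,\mathsf m<3^{N+1}$ because $z<3$. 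Theorem~\ref{thm:ternary-inequality} gives $S_3(d)+S_3(a)+S_3(c)-3S_3(\mathsf m)\le d-a$. Applying \eqref{eq:trollope-ternary} at level $N+1$ to each term, the linear parts cancel since $a+c+d-3\mathsf m=0$, and after dividing by $3^{N+1}$ and rearranging one is left with
\begin{equation*}
3h_3\!\left(\frac{\bar w}{3}\right)\le h_3\!\left(\frac{x}{3}\right)+h_3\!\left(\frac{y}{3}\right)+h_3\!\left(\frac{z}{3}\right)+\frac{z-x}{3},\qquad\bar w:=\frac{x+y+z}{3}.
\end{equation*}

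Finally, apply $h_3(t/3)=g_3(t/3)+\tfrac13h_3(t)$ to all four terms: the $h_3$-contributions reproduce exactly \eqref{eq:ternary-approximate-convexity}, and there remains only the discrepancy $g_3(x/3)+g_3(y/3)+g_3(z/3)-3g_3(\bar w/3)$. Since $x/3,y/3,z/3\in[0,1)$ (here $z<3$ is used), $\bar w/3$ is their arithmetic mean, and $g_3$ is concave on $[0,1]$, Jensen's inequality makes this discrepancy $\le 0$, which finishes the proof. The step I expect to be the main obstacle is precisely this last one: applying Theorem~\ref{thm:ternary-inequality} at a power of $3$ that clears denominators exactly only controls triples of span at most $1$, so a general triple must be handled one scale coarser, at level $3^{N+1}$ rather than $3^N$; it is the concavity of the ``generating function'' $g_3$ on its fundamental interval that absorbs the resulting mismatch. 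Keeping the reductions (periodicity, reflection, the trivial large-span case, the choice of $N$) in order is the other place that needs care.
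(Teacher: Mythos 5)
Your proof is correct, and its engine is the same as the paper's: the identity $S_3(n)=nN-3^Nh_3(n/3^N)$ (the case $b=3$ of \eqref{eq:b-ary-expression}), the evenness reduction to $y\le\tfrac12(x+z)$, continuity, and then Theorem~\ref{thm:ternary-inequality} applied to $(\mathsf{k},\mathsf{l},\mathsf{m})=(\mathsf{m}-a,\mathsf{m}-c,\mathsf{m})$ with $3\mathsf{m}=a+c+d$ --- all of these steps I have checked, including $0\le\mathsf{l}\le\mathsf{k}\le\mathsf{m}$, $2\mathsf{k}+\mathsf{l}=d-a$, and the cancellation of the linear terms. The genuine difference lies in your final stage, and it is a substantive one. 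The paper substitutes \eqref{eq:b-ary-expression} directly at the common denominator $3^n$ of $x,y,z$, which tacitly requires $m+k+l\le 3^n$, i.e.\ $z\le 1$: the identity fails once the argument exceeds $b^n$ (already $h_b((b^n+1)/b^n)=h_b(1/b^n)$ disagrees with the formula by $b^{-n}$), and the reduction to $z\le 1$ is not automatic, since folding a triple into $[0,1]$ by periodicity and evenness does not preserve arithmetic means. Your detour --- disposing of spans $z-x\ge 2$ via $0\le h_3\le\tfrac12$, normalizing to $0\le x\le y\le z<3$, applying the identity one scale coarser at $3^{N+1}$, and then descending one level via $h_3(t/3)=g_3(t/3)+\tfrac13h_3(t)$ together with Jensen's inequality for the concave function $g_3(t)=\min\{t,1-t,\tfrac13\}$ on $[0,1]$ --- closes exactly this case (the sign of the $g_3$-discrepancy works out as you claim). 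So your argument is complete and, on this point, more careful than the one in the paper.
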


It is straightforward to deduce Theorems \ref{thm:Delange-approx-convex} and \ref{thm:lev} from Theorems \ref{thm:general-bound} and \ref{thm:ternary-inequality}, respectively. The key is to derive an expression for $h_b$ at points of the form $x=k/b^n$ in terms of $S_b$, and to use the continuity of $h_b$.

We first define the partial sums
\begin{equation*}
h_b^{(n)}(x)=\sum_{k=0}^{n-1} b^{-k}g_b(b^k x),
\end{equation*}
and note that for $k\in\ZZ$, $h_b(k/b^n)=h_b^{(n)}(k/b^n)$.
For $x\in[0,1)$, let $x=\sum_{i=1}^\infty \eps_i(x)b^{-i}$ denote the $b$-ary expansion of $x$, where $\eps_i(x)\in\{0,1,\dots,b-1\}$. If $x$ is of the form $x=k/b^n$, we take the expansion ending in all zeros. Observe that for each $x\in(0,1)$, the right-hand derivative of $g_b$ at $x$ is $(b-1)/2-\eps_1(x)$.
Hence, by the periodicity of $g_b$, the slope of $h_b^{(n)}$ at any point $x$ not of the form $k/b^n$ is
\begin{equation*}
\sum_{k=1}^n\left(\frac{b-1}{2}-\eps_k(x)\right)=\frac{b-1}{2}n-\sum_{k=1}^n \eps_k(x). 
\end{equation*}
This simple observation yields the formula
\begin{equation*}
h_b\left(\frac{k}{b^n}\right)-h_b\left(\frac{k-1}{b^n}\right) =b^{-n}\left(\frac{b-1}{2}n-s_b(k-1)\right),
\end{equation*}
and hence,
\begin{equation}
h_b\left(\frac{k}{b^n}\right)= b^{-n}\left(\frac{b-1}{2}kn -\sum_{i=0}^{k-1}s_b(i)\right) =b^{-n}\left(\frac{b-1}{2}kn-S_b(k)\right).
\label{eq:b-ary-expression}
\end{equation}

\begin{proof}[Proof of Theorem \ref{thm:Delange-approx-convex}]
Assume first that there exist nonnegative integers $n$, $m$ and $k$ such that
\begin{equation}
x=\frac{m-k}{b^n}, \qquad y=\frac{m+k}{b^n},
\label{eq:b-adic-rational}
\end{equation}
so that $(x+y)/2=m/b^n$.
One verifies easily using \eqref{eq:b-ary-expression} that
\begin{equation*}
2h_b\left(\frac{m}{b^n}\right)-h_b\left(\frac{m-k}{b^n}\right)-h_b\left(\frac{m+k}{b^n}\right)=\frac{S_b(m-k)+S_b(m+k)-2S_b(m)}{b^n},
\end{equation*}
since the terms involving $(b-1)/2$ cancel. Thus, Theorem \ref{thm:general-bound} gives \eqref{eq:Delange-approx-convex} for $x$ and $y$ of the form \eqref{eq:b-adic-rational}, as $k/b^n=(y-x)/2$. But any two real points $x$ and $y$ with $x<y$ can be approximated arbitrarily closely by points $x'$ and $y'$ of the form \eqref{eq:b-adic-rational}. Thus, the proof is completed by using the continuity of $h_b$.
\end{proof}

\begin{proof}[Proof of Theorem \ref{thm:lev}]
Let $0\leq x\leq y\leq z$, and put $a=(x+y+z)/3$. By symmetry of $h_3$, we may assume without loss of generality that $y\leq (x+z)/2$, so that $x\leq y\leq a\leq z$. Since $h_3$ is continuous, we may assume further that $x,y,z$ and $a$ are all triadic rational; that is, there exist nonnegative integers $n, m, k$ and $l$ with $m\geq k\geq l$ such that
\begin{equation*}
a=\frac{m}{3^n}, \qquad x=\frac{m-k}{3^n}, \qquad y=\frac{m-l}{3^n}, \qquad z=\frac{m+k+l}{3^n}.
\end{equation*}
Upon multiplying both sides by $3$, we can write \eqref{eq:ternary-approximate-convexity} for this case as
\begin{equation*}
3h_3\left(\frac{m}{3^n}\right)\leq h_3\left(\frac{m-k}{3^n}\right) +h_3\left(\frac{m-l}{3^n}\right) +h_3\left(\frac{m+k+l}{3^n}\right) +\frac{2k+l}{3^n}.
\end{equation*}
By \eqref{eq:b-ary-expression}, this is equivalent to
\begin{multline*}
3(mn-S_3(m))\leq [(m-k)n-S_3(m-k)]+[(m-l)n-S_3(m-l)]\\
+[(m+k+l)n-S_3(m+k+l)]+(2k+l),
\end{multline*}
and this simplifies to \eqref{eq:ternary-inequality}.
\end{proof}

Note that the number $n$ disappears from the inequality in the end. This suggests that Lev's approach of induction on $n$ is perhaps not the most natural. While the above proof uses Theorem \ref{thm:ternary-inequality}, whose proof is quite long, Lev's original proof is rather lengthy as well, and the present proof seems to be conceptually more pleasing.

\footnotesize


\begin{thebibliography}{6}

\bibitem{Allaart}
{\sc P. C. Allaart}, An inequality for sums of binary digits, with application to Takagi functions, {\em J. Math. Anal. Appl.} {\bf 381} (2011), no. 2, 689--694.

\bibitem{Boros}
{\sc Z. Boros}, An inequality for the Takagi function. {\em Math. Inequal. Appl.} {\bf 11} (2008), no. 4, 757--765.

\bibitem{Delange}
{\sc H. Delange}, Sur la fonction sommatoire de la fonction ``somme des chiffres", {\em Enseignement Math.} {\bf 21} (1975), 31--47.

\bibitem{Drmota}
{\sc M. Drmota}, {\sc C. Mauduit} and {\sc J. Rivat}, The sum-of-digits function of polynomial sequences, {\em J. Lond. Math. Soc. (2)} {\bf 84} (2011), no. 1, 81--102. 

%\bibiem{Fouvry}
%{\sc E. Fouvry} and {\sc C. Mauduit}, Sur les entiers dont la somme des chiffres est moyenne, {\em J. Number Theory} {\bf 114} (2005), no. 1, 135--152.

\bibitem{Hazy-Pales}
{\sc A. H\'azy} and {\sc Zs. P\'ales}, On approximately midconvex functions, {\em Bull. London Math. Soc.} {\bf 36} (2004), 339--350.

\bibitem{Lev} 
{\sc V. F. Lev}, Edge-isoperimetric problem for Cayley graphs and generalized Takagi function, {\em preprint}, arXiv:1202.2566 (2012)

\bibitem{Mauduit}
{\sc C. Mauduit} and {\sc A. S\'ark\"ozy}, On the arithmetic structure of the integers whose sum of digits is fixed, {\em Acta Arith.} {\bf 81} (1997), no. 2, 145--173.

\bibitem{McIlroy}
{\sc M. D. McIlroy}, The number of 1's in binary integers: bounds and extremal properties, {\em SIAM J. Comput.} {\bf 3} (1974), no. 4, 255--261.

\bibitem{Rivat}
{\sc J. Rivat}, On Gelfond's conjecture about the sum of digits of prime numbers, {\em J. Th\'eor. Nombres Bordeaux} {\bf 21} (2009), no. 2, 415--423.

\bibitem{Stolarsky}
{\sc K. B. Stolarsky}, Power and exponential sums of digital sums related to binomial coefficient parity, {\em SIAM J. Appl. Math.} {\bf 32} (1977), no. 4, 713--730.

\bibitem{Takagi} 
{\sc T.~Takagi}, A simple example of the continuous function without derivative, {\em Phys.-Math. Soc. Japan} {\bf 1} (1903), 176-177. {\em The Collected Papers of Teiji Takagi}, S. Kuroda, Ed., Iwanami (1973), 5--6. 

\bibitem{Trollope}
{\sc J. R. Trollope}, An explicit expression for binary digital sums, {\em Math. Mag.} {\bf 41} (1968), 21--25.


\end{thebibliography}
\end{document}